\newtheorem{thm}{Theorem}[section]
\newtheorem{cor}[thm]{Corollary}
\newtheorem{lem}[thm]{Lemma}
\newtheorem{prop}[thm]{Proposition}
\theoremstyle{remark}
\newcommand{\norm}[1]{\left|\!\left|{#1}\right|\!\right|}
\newcommand{\Vol}{\ensuremath{\text{Vol}}}
\renewcommand{\S}{\ensuremath{\mathbb{S}}}
\newcommand{\R}{\ensuremath{\mathbb{R}}}
\newcommand{\E}{\ensuremath{\mathbb{E}}}
\newcommand{\N}{\ensuremath{\mathbb{N}}}
\begin{document}
\title{Small scale equidistribution of random waves generated by an unfair coin flip}
\author{Miriam Leonhardt}
\address{Department of Mathematics, University of Auckland, New Zealand }
\email{mleo705@aucklanduni.ac.nz}

\author{Melissa Tacy}
\address{Department of Mathematics, University of Auckland, New Zealand }
\email{melissa.tacy@auckland.ac.nz}

\keywords{eigenfunction, plane wave, random wave, random eigenfunction, equidistribution at small scales, unfair coin}
\subjclass{58J50.35P20,60B10}

\thanks{The authors would like to acknowledge Jeroen Schillewaert for asking Tacy what would happen to random wave equidistribution if the coefficients were not from a fair coin type distribution. Further we thank our reviewer for their comments and advice which improved the paper. The authors also acknowledge the support of the University of Auckland's Summer Scholar Scheme which funded this project.}

\begin{abstract}
In this paper we study the small scale equidistribution property of random waves whose coefficients are determined by an unfair coin. That is the coefficients take value $+1$ with probability $p$ and $-1$ with probability $1-p$.  Random waves whose coefficients are associated with a fair coin are known to equidistribute down to the wavelength scale. We obtain explicit requirements on the deviation from the fair ($p=0.5$) coin to retain equidistribution. \end{abstract}

\maketitle
Lately there been a renewed interest in the properties of random waves, in particular their small scale equidistribution properties. Berry \cite{Ber77} introduced ensembles of random waves as a model for chaotic billiards. Random waves are functions of $\R^{n}$ of the form
\begin{equation}\sum_{\xi_{j}\in \Lambda}C_{j}e^{i\lambda  x\cdot\xi_{j}}\label{rw}\end{equation}
where the coefficients $C_{j}$ are chosen according to a some probability distribution and $\Lambda\subset \S^{n-1}$. Common choices of coefficients include independent random variables such as Gaussian or Rademacher random variables (see for instance \cite{Ber77},\cite{Zel09} and \cite{dCI20}) and uniform probability density on high dimensional unit spheres (see for instance \cite{Zel96},\cite{BL13},\cite{Map13},\cite{Zel14} and \cite{H17}). Usually $\Lambda$ is chosen so that the directions $\xi_{j}$ are equally spaced with spacing less than one wavelength, $\lambda^{-1}$. 

The property of equidistribution (in configuration space) is that the $L^{2}$ density of $u$ is equally spread throughout the domain. Since random waves are defined on an infinite domain typically studies on random waves restrict attention to the ball of radius one about zero and normalise so that
$$\E\left[\int_{B_{1}(0)}|u(x)|^{2}dx\right]=\Vol(B_{1}(0)).$$
In the sense of Berry's model we should understand random waves as representing the behaviour of quantum states in chaotic systems. Therefore by restricting to the ball of radius one about zero we are defining this space to act as our ``universe'' and the normalisation convention tells us that (at least in expectation) the state lives in the universe with probability one. In the context of this normalisation we say that a random wave is strongly equidistributed on a set $X\subset B_{1}(0)$ if
\begin{equation}
\E\left[\int_{X}|u(x)|^{2}dx\right]=\Vol(X)\left(1+o(1)\right)\label{equiXs}\end{equation}
and
\begin{equation}
\sigma^{2}\left[\int_{X}|u(x)|^{2}dx\right]=o((\Vol(X))^{2}).\label{varXest}\end{equation}
In this paper we also allow for a concept of weak equidistribution where \eqref{varXest} holds but \eqref{equiXs} is replaced by
\begin{equation}
c\Vol(X)\leq \E\left[\int_{X}|u(x)|^{2}dx\right]\leq C\Vol(X).\label{equiXw}\end{equation}
So in the setting of weak equidistribution the probability of a state being located in the set $X$ is proportional to the volume of $X$.

In this paper we are interested in the two dimensional problem where $X$ is a small ball (one whose radius decays to zero as some power of $\lambda^{-1}$). For convenience we will consider the ball about the origin however none of our analysis is dependent on this centre point so the results hold for balls centred around general points $p\in \R^{2}$. In the setting of manifolds the question of equidistribution on small balls where the coefficients are uniformly distributed on the sphere or Gaussian are resolved in \cite{HanTac20} and \cite{dCI20} respectively. While Rademacher coefficients have not been explicitly studied, most of the results of \cite{dCI20} rely on properties of Gaussian random variables that are shared by Rademacher coefficients. The conclusion of these papers is that strong equidistribution of random waves holds on small balls of radius $\lambda^{-\alpha}$ so long as $\alpha<1$. Here we consider a variant of the Rademacher $\pm 1$ coefficients, one associated with an ``unfair coin''. That is we assign each coefficient the value $+1$ with probability $p$ and $-1$ with probability $1-p$. As with Rademacher and Gaussian coefficients the individual coefficients remain independent of each other. We ask, just how unfair does the coin have to be before we lose the property of equidistribution?

Before stating the theorems of this paper it is worth considering how large $\int_{B_{r}(0)}|u(x)|^{2}dx$ can be if we  do not randomise coefficients. From Sogge \cite{Sogge16} we see that for eigenfunctions (and in fact spectral clusters) on Riemannian manifolds $(M,g)$,
\begin{equation}\norm{u}_{L^{2}(B_{r}(0))}\leq r^{1/2}\norm{u}_{L^{2}(M)}\label{Soggebnd}\end{equation}
and that in fact this upper bound has sharp examples. The same is true for approximate eigenfunctions on $\R^{2}$. Consider for example the function given by
$$v(x)=\lambda^{\frac{1}{2}}\int_{\S}e^{i\lambda x\cdot \xi}d\mu(\S).$$
That is the ($L^{2}$ normalised) inverse Fourier transform of the surface measure of the unit circle $\S$. This example has a significant history in the analysis of restriction operators and is the standard example for sharpness of the Fourier restriction problem when $p<\frac{2n}{n+1}$, (see for example section 1.2 of \cite{Tao04}). Using the method of stationary phase it can be shown that
$$|v(x)|=C\left(1+\lambda|x|\right)^{-\frac{1}{2}}$$
and therefore saturates \eqref{Soggebnd}. For comparison an equidistributed eigenfunction would have $\norm{u}_{L^{2}(B_{r}(0))}\approx r\norm{u}_{L^{2}(M)}$. 

Let us look at the extreme case of a completely unfair coin. In this case we always have a coefficient of $+1$. Then
\begin{equation}u=\sum_{\xi_{j}\in \Lambda}e^{i\lambda x\cdot \xi_{j}}.\label{unfairsum}\end{equation}
Supposing that the $\xi_{j}$ are spaced at scales much smaller than the wavelength we would then expect to be able to replace the sum in \eqref{unfairsum} with an integral (and indeed in Section \ref{sec:mainexpint} we perform just such a replacement). Then we have
$$u=C_{\Lambda}\int e^{i\lambda x\cdot \xi}d\mu(\S)+\text{Error}$$
where $C_{\Lambda}$ is a re-normalisation constant that depends on the number of element of $\Lambda$ and the error term is small enough to be ignored. Notice that in this case $u$ is (up to a constant and an error term) equal to the inverse Fourier transform of surface measure. Therefore in the extreme case of a completely unfair coin the growth of random waves on small balls is no better than that of eigenfunctions in general while those associated with a completely fair coin are equidistributed. 

We now address the intermediate cases. For the purposes of this paper rather than considering
$$\int_{B_{0}(r)}|u(x)|^{2}dx$$
for $r=\lambda^{-\alpha}$ we look at a smoothed version
$$\int a^{2}(\lambda^{\alpha}|x|)|u(x)|^{2}dx$$
where $a(r)$ is a smooth, cut-off function supported on $[-2,2]$ and assumed to be equal to one on $[-1,1]$. We first obtain upper bounds for 
$$\E[\norm{a_{\lambda}u}_{L^{2}}^{2}]=\E\left[\int a_{\lambda}(x)|u(x)|^{2}dx\right]=\E\left[\int a^{2}(\lambda^{\alpha}|x|)|u(x)|^{2}dx\right]$$
in the case where $\Lambda$ is a set of $N=\gamma\lambda$ equi-spaced directions $\xi_{j}$ with $(\lambda\gamma)^{-1}$ spacing.

\begin{thm}\label{thm:mainexpsum}
Suppose $u$ is a random wave given by \eqref{rw} where $\Lambda$ is set of equi-spaced directions $\xi_{j}$ with spacing $(\lambda\gamma)^{-1}$ and the coefficients are independent random variables each taking the value $+1$ with probability $p$ and $-1$ with probability $1-p$. Then, for $\alpha<1$,
\begin{equation}\E\left[\norm{a_{\lambda}u}_{L^{2}}^{2}\right]\leq C(\gamma\lambda^{1-2\alpha}+(2p-1)^{2}\gamma^{2}\lambda^{1-\alpha})\label{eqn:expest}.\end{equation}
\end{thm}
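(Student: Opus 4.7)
My approach is to expand the square, pass the expectation inside, and reduce everything to a counting argument about how many pairs of directions $\xi_j,\xi_k\in\Lambda$ are close together on $\S^1$.

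First I would write
\[
\E\left[\|a_\lambda u\|_{L^2}^2\right]
= \sum_{j,k}\E[C_jC_k]\int a^2(\lambda^\alpha|x|)\,e^{i\lambda x\cdot(\xi_j-\xi_k)}\,dx.
\]
Because the coefficients are independent with $\E[C_j^2]=1$ and $\E[C_j]=2p-1$, the matrix $\E[C_jC_k]$ is the identity on the diagonal and equals $(2p-1)^2$ off the diagonal. This cleanly separates the estimate into a diagonal contribution and an off-diagonal contribution whose overall prefactor is $(2p-1)^2$ — precisely the structure of the two terms in \eqref{eqn:expest}.

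For the diagonal, $\E[C_j^2]=1$ gives $N\int a^2(\lambda^\alpha|x|)dx = N\lambda^{-2\alpha}\|a\|_{L^2(\R^2)}^2$, which is $O(\gamma\lambda^{1-2\alpha})$ after using $N=\gamma\lambda$ — this matches the first term. For the off-diagonal, I would recognize the oscillatory integral as a Fourier transform: setting $A(y)=a^2(|y|)$, which is a smooth compactly supported (hence Schwartz) radial function on $\R^2$, and rescaling by $\lambda^\alpha$ gives
\[
\int a^2(\lambda^\alpha|x|)\,e^{i\lambda x\cdot(\xi_j-\xi_k)}\,dx
= \lambda^{-2\alpha}\widehat{A}\!\left(-\lambda^{1-\alpha}(\xi_j-\xi_k)\right).
\]
Since $\widehat{A}$ is Schwartz, this integral is bounded by $C_M\lambda^{-2\alpha}(1+\lambda^{1-\alpha}|\xi_j-\xi_k|)^{-M}$ for every $M$.

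It then remains to sum this bound over all pairs $j\neq k$. The $\xi_j$ are $(\lambda\gamma)^{-1}$-spaced on $\S^1$, so for fixed $j$ the number of $k$ with $|\xi_j-\xi_k|\leq \lambda^{\alpha-1}$ is $O(\gamma\lambda^\alpha)$. Choosing $M$ large enough that the tails of $\widehat{A}$ sum geometrically, the total off-diagonal contribution is
\[
(2p-1)^2\cdot N\cdot O(\gamma\lambda^\alpha)\cdot\lambda^{-2\alpha}
= O\!\left((2p-1)^2\gamma^2\lambda^{1-\alpha}\right),
\]
matching the second term. The main obstacle I anticipate is cleanly controlling the tail of the pair sum where $|\xi_j-\xi_k|\gg\lambda^{\alpha-1}$: one has to use the Schwartz decay quantitatively to show that, even though there are $N^2$ pairs in total, the contribution from well-separated pairs is dominated by the near-diagonal count above. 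The hypothesis $\alpha<1$ is what makes the rescaling parameter $\lambda^{1-\alpha}$ tend to infinity, so that the near-diagonal pairs are genuinely a small (but not too small) subset, and the decay of $\widehat{A}$ is usable.
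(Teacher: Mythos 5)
Your proposal is correct and follows essentially the same route as the paper: the same diagonal/off-diagonal split via $\E[C_jC_k]$, the same bound $C_M\lambda^{-2\alpha}(1+\lambda^{1-\alpha}|\xi_j-\xi_k|)^{-M}$ on the off-diagonal integrals (you quote Schwartz decay of $\widehat{a^2}$ where the paper carries out the integration by parts explicitly in Theorem \ref{OscillatoryInt}), and the same near-diagonal count of $O(\gamma\lambda^{\alpha})$ directions per fixed $j$ with geometrically summable tails, which is exactly the dyadic decomposition of Lemma \ref{dyadic}. The tail-control step you flag as the main obstacle is handled in the paper precisely by that dyadic lemma, with $M\geq 2$ sufficing.
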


Ideally we would also like to obtain a lower bound (since this would allow us to explore weak equidistribution). To obtain the lower bound it is necessary to replace various sums with integrals, see Section \ref{sec:mainexpint}. This replacement should be understood as giving us lower bounds when the spacing between directions becomes significantly smaller than the wavelength associated with the oscillation. In our model this would correspond to making $\gamma$ large. 

\begin{thm}\label{thm:mainexpint}
Suppose $u$ is a random wave given by \eqref{rw} where $\Lambda$ is set of equi-spaced directions $\xi_{j}$ with spacing $(\lambda\gamma)^{-1}$ and the coefficients are independent random variables each taking the value $+1$ with probability $p$ and $-1$ with probability $1-p$. Then, for $\alpha<1$, 
\begin{equation} c(\gamma\lambda^{1-2\alpha}+(2p-1)^{2}\gamma^{2}\lambda^{1-\alpha})\leq \E\left[\norm{a_{\lambda}u}_{L^{2}}^{2}\right]\leq C(\gamma\lambda^{1-2\alpha}+(2p-1)^{2}\gamma^{2}\lambda^{1-\alpha})\label{eqn:explower}.\end{equation}
\end{thm}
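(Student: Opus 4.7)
Since Theorem~\ref{thm:mainexpsum} already furnishes the upper bound, the real task in Theorem~\ref{thm:mainexpint} is the matching lower estimate. The plan is to compute $\E[|u(x)|^{2}]$ pointwise, split it into a diagonal and an off-diagonal piece, and show that after integration against $a^{2}(\lambda^{\alpha}|x|)$ each piece reproduces, up to a constant, one of the two terms on the right of \eqref{eqn:expest}. Using independence of the $C_{j}$ together with $\E[C_{j}]=2p-1$ and $\E[C_{j}^{2}]=1$, a direct expansion yields
\[
\E[|u(x)|^{2}]=\sum_{j}\E[C_{j}^{2}] + \sum_{j\neq k}\E[C_{j}C_{k}]\,e^{i\lambda x\cdot(\xi_{j}-\xi_{k})} = N\bigl(1-(2p-1)^{2}\bigr) + (2p-1)^{2}\Bigl|\sum_{j}e^{i\lambda x\cdot\xi_{j}}\Bigr|^{2}.
\]
Integrating the diagonal term against $a^{2}(\lambda^{\alpha}|x|)$ and using $N=\gamma\lambda$ together with the fact that in two dimensions the support of $a(\lambda^{\alpha}|\cdot|)$ has volume of order $\lambda^{-2\alpha}$ produces, whenever $p$ is bounded away from $1$, a contribution of size $c\,\gamma\lambda^{1-2\alpha}$.

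For the oscillatory piece I would replace the sum by its continuum counterpart. The $N=\gamma\lambda$ equi-spaced directions form a Riemann grid on $\S^{1}$ of spacing $2\pi/N$, so
\[
\sum_{j}e^{i\lambda x\cdot\xi_{j}} = \frac{\gamma\lambda}{2\pi}\int_{\S^{1}}e^{i\lambda x\cdot\xi}\,d\sigma(\xi) + E(x) = \gamma\lambda\, J_{0}(\lambda|x|) + E(x),
\]
where $J_{0}$ denotes the usual Bessel function. Using the large-argument asymptotic $|J_{0}(r)|^{2}\sim (\pi r)^{-1}$, valid since $\lambda|x|\gtrsim\lambda^{1-\alpha}\to\infty$ on the annulus where $a(\lambda^{\alpha}|x|)$ is supported but $|x|\gtrsim\lambda^{-1}$, one obtains, ignoring $E(x)$ momentarily,
\[
(2p-1)^{2}\int a^{2}(\lambda^{\alpha}|x|)(\gamma\lambda)^{2}|J_{0}(\lambda|x|)|^{2}\,dx \;\sim\; (2p-1)^{2}\gamma^{2}\lambda^{2}\int_{0}^{c\lambda^{-\alpha}}\!\frac{r}{\lambda r}\,dr \;\sim\; (2p-1)^{2}\gamma^{2}\lambda^{1-\alpha},
\]
which is exactly the second required term.

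The main technical obstacle will be controlling the error $E(x)$ uniformly on the support of $a(\lambda^{\alpha}|\cdot|)$. The integrand $\xi\mapsto e^{i\lambda x\cdot\xi}$ oscillates on $\S^{1}$ at frequency $\lambda|x|\lesssim\lambda^{1-\alpha}$, while the grid spacing is $(\gamma\lambda)^{-1}$; a standard trapezoidal / Euler--Maclaurin estimate then gives $|E(x)|$ smaller than the main term by a factor of order $\gamma^{-1}\lambda^{-\alpha}$, which is $o(1)$ precisely in the large-$\gamma$ regime for which Theorem~\ref{thm:mainexpint} is designed. Once this error is absorbed, and the resulting cross term $2\gamma\lambda\,\mathrm{Re}(J_{0}(\lambda|x|)\overline{E(x)})$ is shown to be of lower order by Cauchy--Schwarz, the diagonal and oscillatory contributions combine to yield $\E[\|a_{\lambda}u\|_{L^{2}}^{2}]\geq c(\gamma\lambda^{1-2\alpha}+(2p-1)^{2}\gamma^{2}\lambda^{1-\alpha})$, and pairing this with Theorem~\ref{thm:mainexpsum} gives both inequalities in \eqref{eqn:explower}.
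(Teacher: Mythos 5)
Your route is essentially the paper's: rewrite the expectation so that the off-diagonal part becomes $(2p-1)^2$ times an oscillatory quantity, replace the equi-spaced sum $\sum_j e^{i\lambda x\cdot\xi_j}$ by $\frac{\gamma\lambda}{2\pi}\int_{\S^1}e^{i\lambda x\cdot\xi}d\mu=\gamma\lambda J_0(\lambda|x|)$ plus an error $E(x)$, evaluate the main term by Bessel/stationary-phase asymptotics to get $\gamma^2\lambda^{1-\alpha}$, and absorb the error (the paper does exactly this, via Lemma \ref{sumtoint}, Lemma \ref{mosp} and Lemma \ref{lem:Exerror}). The genuine gap is your control of $E(x)$. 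The estimate you invoke — a first-order trapezoidal/Euler--Maclaurin bound, ``smaller than the main term by a factor $\gamma^{-1}\lambda^{-\alpha}$'' — is relative to the sum's maximal size $\gamma\lambda$, i.e.\ it is precisely the pointwise bound $|E(x)|\lesssim\lambda|x|\lesssim\lambda^{1-\alpha}$ that the paper obtains by Taylor expansion and then explicitly notes is not good enough. On the bulk of the support of $a_\lambda$ one has $|x|\sim\lambda^{-\alpha}$, where the main term is only of pointwise size $\gamma\lambda(\lambda|x|)^{-1/2}\sim\gamma\lambda^{(1+\alpha)/2}$; feeding $|E|\lesssim\lambda^{1-\alpha}$ into the integral gives $\norm{a_\lambda E}_{L^2}^2\lesssim\lambda^{2-4\alpha}$ and a Cauchy--Schwarz cross term $\lesssim\gamma\lambda^{(1-\alpha)/2}\cdot\lambda^{1-2\alpha}$, and these are dominated by the main contribution $\gamma^2\lambda^{1-\alpha}$ only when $\gamma\gg\lambda^{(1-3\alpha)/2}$. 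Since the theorem is claimed for all $\alpha<1$ and the relevant regime has $\gamma$ growing slowly (the paper later takes $\gamma\leq\log\lambda$), your argument does not close for $\alpha\leq 1/3$. The paper's fix (Lemma \ref{lem:Exerror}) is to bound $\norm{a_\lambda E}_{L^2}$ rather than $E$ pointwise, exploiting the oscillation in $x$ through the integration-by-parts estimate of Theorem \ref{OscillatoryInt} and the dyadic count of Lemma \ref{dyadic}; this yields $\norm{a_\lambda E}_{L^2}\lesssim\gamma^{1/2}\lambda^{(1-\alpha)/2}$, smaller than the $L^2$ size of the main term by $\gamma^{-1/2}$ uniformly in $\alpha<1$, after which your Cauchy--Schwarz step does the job. (Alternatively you could salvage a pointwise argument by using the full strength of equi-spaced quadrature of smooth periodic functions: by aliasing only Fourier modes divisible by $N=\gamma\lambda$ survive, and by Jacobi--Anger these are $J_{mN}(\lambda|x|)$ with $mN\gg\lambda|x|$, hence negligible — but that is a much stronger statement than the first-order bound you actually cite.)

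Two smaller points. Your decomposition attaches the factor $1-(2p-1)^2$ to the diagonal term, so the $c\gamma\lambda^{1-2\alpha}$ you extract from it degenerates as $p\to 0$ or $1$; to recover the stated lower bound you should add that in that regime $(2p-1)^2\gamma^2\lambda^{1-\alpha}\gtrsim\gamma\lambda^{1-2\alpha}$ because $\gamma\lambda^{\alpha}\to\infty$, so the sum on the left of \eqref{eqn:explower} is still controlled. Also, for the lower bound on the oscillatory piece you cannot use $|J_0(r)|^2\sim(\pi r)^{-1}$ pointwise, since $J_0^2\sim\frac{2}{\pi r}\cos^2(r-\frac{\pi}{4})$ vanishes periodically; as in the paper, write $\cos^2(\lambda r-\frac{\pi}{4})=\frac12(1+\sin(2\lambda r))$ and integrate over $[\lambda^{-1},\lambda^{-\alpha}]$, where the many oscillations make the sine contribution lower order.
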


The final ingredient in our understanding of equidistribution is control of the variance. Note that the expectation could be equidistributed by the values fluctuating wildly so that a ``typical'' random wave was in fact not equidistributed. This is indeed the case for the ``fair coin'' distribution on balls smaller than the wavelength, $r\leq \lambda^{-1}$. If however the variance decays in comparison to the (normalised) volume of the ball then typical random waves from this distribution will equidistribute. 

\begin{thm}\label{thm:mainvar}
Suppose $u$ is a random wave given by \eqref{rw} where $\Lambda$ is set of equi-spaced directions $\xi_{j}$ with spacing $(\lambda\gamma)^{-1}$ and the coefficients are independent random variables each taking the value $+1$ with probability $p$ and $-1$ with probability $1-p$. Then, for $\alpha<1$, 
\begin{equation}
\sigma^{2}\left[\norm{a_{\lambda}u}_{L^{2}}^{2}\right]\leq C_1\lambda^{1-3\alpha}\gamma^{2}\left(1-(2p-1)^{2}\right)^2+C_2\gamma^3\lambda^{1-2\alpha}(2p-1)^2(1-(2p-1)^2)\label{eqn:varest}.\end{equation}
\end{thm}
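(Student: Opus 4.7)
The strategy is to centre the coefficients as $C_j = q + \epsilon_j$ with $q = 2p-1$, so that the $\epsilon_j$ are independent, bounded, mean-zero random variables satisfying $\E[\epsilon_j^2] = 1-q^2$, $\E[\epsilon_j^3] = -2q(1-q^2)$ and $\E[\epsilon_j^4] = (1-q^2)(1+3q^2)$. This decomposes the random wave as $u = qV + W$ with $V = \sum_j e^{i\lambda x \cdot \xi_j}$ deterministic and $W = \sum_j \epsilon_j e^{i\lambda x \cdot \xi_j}$ carrying all the randomness. Writing $\Phi_{jk} = \int a_\lambda(x)\,e^{i\lambda x \cdot (\xi_j-\xi_k)}\,dx$ and $S_j = \sum_k \Phi_{jk}$, expanding $|u|^2 = q^2|V|^2 + 2q\,\Re(\bar V W) + |W|^2$ and subtracting the mean produces the splitting
\[
\norm{a_\lambda u}_{L^2}^2 - \E\bigl[\norm{a_\lambda u}_{L^2}^2\bigr] = A + B + C,
\]
where $A = 2q\sum_j \epsilon_j S_j$, $B = \sum_{j\neq k}\epsilon_j \epsilon_k\Phi_{jk}$ and $C = \sum_j(\epsilon_j^2 - (1-q^2))\Phi_{jj}$.

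First I would compute each variance directly using the moment identities above, obtaining $\sigma^2(A) = 4q^2(1-q^2)\sum_j S_j^2$, $\sigma^2(B) = 2(1-q^2)^2\sum_{j\neq k}\Phi_{jk}^2$ and $\sigma^2(C) = 4q^2(1-q^2)\sum_j\Phi_{jj}^2$. Next I would check that $\mathrm{Cov}(A,B) = \mathrm{Cov}(B,C) = 0$, because every summand in those expectations contains an isolated factor $\epsilon_\ell$ whose first moment vanishes. The remaining cross term $\mathrm{Cov}(A,C)$ is driven by the third moment $\E[\epsilon_j^3]$, but by Cauchy--Schwarz it is bounded by $\sigma(A)\sigma(C)$ and is absorbed by the diagonal variances.

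The analytic input is the estimation of the three deterministic sums. After the substitution $y = \lambda^\alpha x$ one finds $\Phi_{jk} = \lambda^{-2\alpha}\,\widehat{a^2}(-\lambda^{1-\alpha}(\xi_j-\xi_k))$, a Schwartz function of its argument, so $\Phi_{jk}$ has size $\lambda^{-2\alpha}$ on the $O(\lambda^\alpha\gamma)$ indices $k$ with $|\xi_j-\xi_k| \lesssim \lambda^{\alpha-1}$ and is rapidly decaying outside that range. A direct count of the near-neighbours then yields
\[
S_j \lesssim \gamma\lambda^{-\alpha}, \qquad \sum_j S_j^2 \lesssim \gamma^3\lambda^{1-2\alpha}, \qquad \sum_{j\neq k}\Phi_{jk}^2 \lesssim \gamma^2\lambda^{1-3\alpha}, \qquad \sum_j\Phi_{jj}^2 \lesssim \gamma\lambda^{1-4\alpha}.
\]
Plugging these into the variance decomposition, the $B$-variance contributes $\lesssim (1-q^2)^2\gamma^2\lambda^{1-3\alpha}$ and the $A$-variance contributes $\lesssim q^2(1-q^2)\gamma^3\lambda^{1-2\alpha}$, while the $C$-variance and the surviving cross term are absorbed, yielding \eqref{eqn:varest}.

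The main obstacle is the moment bookkeeping that forces the sharper factor $(1-q^2)^2$ in the first term of \eqref{eqn:varest}. A naive expansion of $\mathrm{Var}(\sum_{j,k}C_jC_k\Phi_{jk})$ only produces the weaker factor $\mathrm{Var}(C_jC_k) = 1-q^4 = (1-q^2)(1+q^2)$ for $j\neq k$; this is why we must first centre. In the off-diagonal piece $B$ the product $\epsilon_j\epsilon_k$ is itself mean zero, so $\mathrm{Var}(\epsilon_j\epsilon_k) = \E[\epsilon_j^2]\,\E[\epsilon_k^2] = (1-q^2)^2$, which is strictly smaller than $1-q^4$ and is the genuine source of the $(1-q^2)^2$ in the statement.
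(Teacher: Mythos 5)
Your proposal is correct, and it reaches \eqref{eqn:varest} by a cleaner organisation of the moment bookkeeping than the paper uses, while sharing the same analytic core. The paper expands $\E\bigl[(\sum_{j,l}C_jC_lI_{jl})^2\bigr]$ directly and works through the nine coincidence patterns of the indices $(j,l,m,n)$ to arrive at the exact identity of Proposition \ref{varianceprop}; you instead centre the coefficients, $C_j=q+\epsilon_j$ with $q=2p-1$, and split the centred mass into $A+B+C$, after which independence kills $\mathrm{Cov}(A,B)$ and $\mathrm{Cov}(B,C)$ and Cauchy--Schwarz absorbs the third-moment term $\mathrm{Cov}(A,C)$. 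Your moment values ($\E[\epsilon_j^2]=1-q^2$, $\E[\epsilon_j^3]=-2q(1-q^2)$, $\E[\epsilon_j^4]=(1-q^2)(1+3q^2)$) are correct, and in fact your decomposition reproduces the paper's identity exactly: writing $\tilde S_j=\sum_{l\neq j}I_{jl}$, one has $\mathrm{Cov}(A,C)=-4q^2(1-q^2)\sum_jS_jI_{jj}$, and $S_j^2+I_{jj}^2-2S_jI_{jj}=\tilde S_j^2$, so $\sigma^2=2(1-q^2)^2\sum_{j\neq l}I_{jl}^2+4q^2(1-q^2)\sum_j\tilde S_j^2$, which is Proposition \ref{varianceprop} for real symmetric $I_{jl}$ (note $I_{jl}$ is real because $a^2(\lambda^\alpha|x|)$ is radial). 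The centring also explains transparently why the off-diagonal piece carries $(1-q^2)^2$ rather than the naive $1-q^4$, something the paper only recovers after regrouping terms. On the analytic side your bound $|\Phi_{jk}|\lesssim_n\lambda^{-2\alpha}(1+\lambda^{1-\alpha}|\xi_j-\xi_k|)^{-n}$ via rescaling and the Schwartz decay of $\widehat{a^2}$ is equivalent to the paper's integration-by-parts estimate (Theorem \ref{OscillatoryInt}), and your near-neighbour count is Lemma \ref{dyadic}; the resulting sums $\sum_{j\neq k}\Phi_{jk}^2\lesssim\gamma^2\lambda^{1-3\alpha}$ and $\sum_jS_j^2\lesssim\gamma^3\lambda^{1-2\alpha}$ match the paper's, and the extra diagonal contribution $q^2(1-q^2)\gamma\lambda^{1-4\alpha}$ from $\sigma^2(C)$ is indeed dominated by the second term of \eqref{eqn:varest} since $\gamma\geq1$. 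The only point to spell out in a full write-up is the tail of the dyadic sum behind the estimate $S_j\lesssim\gamma\lambda^{-\alpha}$, which is exactly the argument of Lemma \ref{dyadic}.
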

Now we can begin to answer the question of equidistribution. We will normalise so that $\E(\norm{a_{1}u}_{L^{2}})=1$ for a fair coin randomisation and compare our results to their normalised volume. As we will see from the expectation calculation in Section \ref{sec:expsum} this normalisation can be achieved by multiplying $u$ by a prefactor of $\gamma^{-1/2}\lambda^{-1/2}$. Recall that we are assuming $\gamma$ is large, we do not however want to take it too large (doing so reduces orthogonality relationships). Our interest is in balls so that $1/r$ grows as a power of $\lambda$. To that end we choose a softer growth rate for $\gamma$ and while we allow $\gamma\to\infty$ we assume that $\gamma\leq \log(\lambda)$. 

From Corollary \ref{cor:equi} we see that equidistribution is preserved if
\begin{equation}\notag
p=0.5+\mathcal{O}( \lambda^{-\frac{\alpha}{2}}\gamma^{-\frac{1}{2}})
\end{equation}
so if we also assume only a logarithmic type growth for $\gamma$ any probability of the form $p=0.5+\lambda^{-\beta}$, where $\beta>\alpha/2$, retains the correct expectation. Using Theorem \ref{thm:mainvar} (and normalising) we get that a normalised unfair random wave has variance bounded by $C_1\lambda^{-1-3\alpha}(1-(2p-1)^2)^{2}+C_2\gamma\lambda^{-1-2\alpha}(2p-1)^2(1-(2p-1)^2)$. Using the condition for equidistribution from Corollary \ref{cor:equi}, the second term is of the same size as the first term: 
$$\sigma^2\leq C_1\lambda^{-1-3\alpha}(1-(2p-1)^2)^{2}+C_2\lambda^{-1-3\alpha}(1-(2p-1)^2).$$ So as long as $\alpha<1$ the variance is sufficiently controlled for $p$ sufficiently close to 0.5. This is discussed in further detail in the lead up to Corollary \ref{cor:var}. 

This paper is arranged in the following fashion. First, in Section \ref{sec:expsum}, we obtain the upper bound of Theorem \ref{thm:mainexpsum}. Then in Section \ref{sec:mainexpint} we replace the sums appearing in our expression for expectation with integrals. We are then able to compute those integrals via the method of stationary phase to obtain Theorem \ref{thm:mainexpint}. Finally in Section \ref{sec:mainvar} we obtain the upper bounds on the variance given in Theorem \ref{thm:mainvar}.

In this paper we adopt the notation $f\lesssim g$ to mean that
$$f\leq Cg$$
where $C$ is a constant independent of the parameters $\lambda$ and $\gamma$ but may change from line to line.

\section{Proof of Theorem \ref{thm:mainexpsum}}\label{sec:expsum}

In this section we will obtain an upper bound on $\E(\norm{a_{\lambda}u}_{L^{2}}^{2})$ for any set of directions $\Lambda$ that are equally spaced on $\S$. Later we will use this and an approximation of sums by integrals to obtain more refined asymptotics.  First we write
 $$\E(\lVert a_\lambda u\rVert^2)=\sum_{k}P_k\int{\sum_{j,l}C^{(k)}_jC^{(k)}_la^{2}(\lambda^\alpha |x|)e^{i\lambda x\cdot\xi_j}e^{-i\lambda x\cdot\xi_l}dx}$$
where $P_k$ is the probability of a random $C$-vector (the vector which stores the values of the $C_j$) being $C^{(k)}$ and sums over $k$ represents the sum over all $C$-vectors. The sum $\sum\limits_kP_k=1$.\\

Since there is only a finite number of $j$ and $l$ ($N$ of each), there are $N^2$ pairs and there is a finite number $(2^N)$ of possible $C$-vectors. This means that both sums involved in the expectation value are finite, so they converge, and their order can be interchanged. Similarly, finite sums commute with integrals so their order can also be swapped, giving:

\begin{align}
\E(\lVert a_\lambda u\rVert^2)&=\sum_{j,l}\left(\sum_kP_kC^{(k)}_jC^{(k)}_l\int{a^{2}(\lambda^\alpha |x|)e^{i\lambda x\cdot(\xi_j-\xi_l)}dx}\right)\notag\\
&=\sum_{j}\left(\sum_{k}P_k\left(C^{(k)}_j\right)^2\int{a^{2}(\lambda^\alpha |x|)e^{i\lambda x\cdot(0)}dx}\right)+\sum_{\substack{j,l\\j\neq l}}\left(\sum_kP_kC^{(k)}_jC^{(k)}_l\int{a^{2}(\lambda^\alpha |x|)e^{i\lambda x\cdot(\xi_j-\xi_l)}dx}\right)\notag\\
&=\sum_j\sum_kP_k\int{a^{2}(\lambda^\alpha |x|)dx}+\sum_{\substack{j,l\\j\neq l}}\left(\sum_kP_kC^{(k)}_jC^{(k)}_l\int{a^{2}(\lambda^\alpha |x|)e^{i\lambda x\cdot(\xi_j-\xi_l)}dx}\right)\notag\\
&=N\int{a^{2}(\lambda^\alpha |x|)dx}+\sum_{\substack{j,l\\j\neq l}}\left(\sum_kP_kC^{(k)}_jC^{(k)}_l\int{a^{2}(\lambda^\alpha |x|)e^{i\lambda x\cdot(\xi_j-\xi_l)}dx}\right)\notag\\
&=N\norm{a^{2}_{\lambda}}_{L^{1}}+\sum_{\substack{j,l\\j\neq l}}\left(\sum_kP_kC^{(k)}_jC^{(k)}_l\int{a^{2}(\lambda^\alpha |x|)e^{i\lambda x\cdot(\xi_j-\xi_l)}dx}\right)\label{sumsE}.
\end{align}

From here on, we will refer to the first term in this expression (\eqref{sumsE}) as the diagonal term and to the second term as the off-diagonal terms. To make progress in the calculation we need to evaluate $\sum_kP_kC_j^{(k)}C_l^{(k)}$ in terms of $p$:

\begin{lem}\label{prob}
For each $j\neq l$ pair, where the coefficients $C_j$ and $C_l$ are independent random variables which take on the value $+1$ with a probability of $p$ or $-1$ with a probability of $1-p$: 
$$\sum_kP_kC^{(k)}_jC^{(k)}_l=(2p-1)^2.$$ 
\end{lem}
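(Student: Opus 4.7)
The plan is to recognize that the sum $\sum_k P_k C_j^{(k)} C_l^{(k)}$ is by definition the expectation $\E[C_j C_l]$, where the sum over $k$ enumerates realizations of the $C$-vector weighted by their probability. Once this is observed, the statement reduces to a standard computation with independent Bernoulli-type random variables.

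First I would compute the expectation of a single coefficient: since $C_j$ takes value $+1$ with probability $p$ and $-1$ with probability $1-p$, we have
\[
\E[C_j] = p\cdot(+1) + (1-p)\cdot(-1) = 2p-1,
\]
and the same for $C_l$. Next, because the coefficients are assumed independent, we have $\E[C_j C_l] = \E[C_j]\,\E[C_l] = (2p-1)^2$.

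To make this rigorous at the level of the explicit sum over $C$-vectors (which is the form used in the paper), I would split the $2^N$ realizations according to the joint values of the pair $(C_j, C_l)$. The four cases $(+,+)$, $(+,-)$, $(-,+)$, $(-,-)$ occur with total probability $p^2$, $p(1-p)$, $(1-p)p$, $(1-p)^2$ respectively, since the remaining $N-2$ coordinates contribute a probability that sums to one over all their realizations. Multiplying each case by the corresponding product $C_j C_l \in \{+1,-1,-1,+1\}$ and summing yields
\[
p^2 - p(1-p) - (1-p)p + (1-p)^2 = (p - (1-p))^2 = (2p-1)^2,
\]
which is the claim.

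There is no real obstacle here; the only thing to be careful about is confirming that marginalizing over the other $N-2$ coordinates indeed produces a factor of one, which follows from $\sum_k P_k = 1$ together with independence. The result is simply a restatement of $\E[C_j C_l] = \E[C_j]\E[C_l]$ in the notation of \eqref{sumsE}.
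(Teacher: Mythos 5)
Your proposal is correct and follows essentially the same route as the paper: identify the sum over $C$-vectors with $\E[C_jC_l]$, use independence to factor it as $\E[C_j]\E[C_l]$, and compute $\E[C_j]=2p-1$. The additional four-case computation over the joint values of $(C_j,C_l)$ is a harmless explicit verification of the same fact, not a different argument.
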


\begin{proof}
Since $\sum_kP_kC_j^{(k)}C_l^{(k)}=\E(C_j^{(k)}C_l^{(k)})$ and the values of entry $j$ and $l$ are independent of each other, $\E(C_j^{(k)}C_l^{(k)})=\E(C_j^{(k)})\cdot\E(C_l^{(k)})$. As the probability of $C_j^{(k)}$ being $+1$ is $p$ and the probability of $C_j^{(k)}$ being $-1$ is $1-p$, we have $\E(C_j^{(k)})=\sum_kP_kC_j^{(k)}=(+1)p+(-1)(1-p)=2p-1$. Therefore $\sum_kP_kC_j^{(k)}C_l^{(k)}=\E(C_j^{(k)}C_l^{(k)})=(2p-1)^2$.
\end{proof}

 Now that we have evaluated $\sum_kP_kC_j^{(k)}C_l^{(k)}$ in terms of $p$, we can substitute this into \eqref{sumsE} to give
\begin{align}
\E(\lVert a_\lambda u\rVert^2)&=N\norm{a^{2}_{\lambda}}_{L^{1}}+(2p-1)^2\sum_{\substack{j,l\\j\neq l}}\int{a^{2}(\lambda^\alpha |x|)e^{i\lambda x\cdot(\xi_j-\xi_l)}dx}\label{Expectation}\\
&=\gamma\lambda\norm{a^{2}_{\lambda}}_{L^{1}}+(2p-1)^2\sum_{\substack{j,l\\j\neq l}}\int{a^{2}(\lambda^\alpha |x|)e^{i\lambda x\cdot(\xi_j-\xi_l)}dx}\notag.
\end{align}

Note that since $a_{\lambda}$ is supported on the ball of radius $2\lambda^{-\alpha}$  we can say that $\norm{a^{2}_{\lambda}}_{L^{1}}\leq C\lambda^{-2\alpha}$ and arrive at
\begin{equation}
\E(\lVert a_\lambda u\rVert^2)\leq C\gamma\lambda^{1-2\alpha}+(2p-1)^2\sum_{\substack{j,l\\j\neq l}}\int{a^{2}(\lambda^\alpha |x|)e^{i\lambda x\cdot(\xi_j-\xi_l)}dx.}\label{OintE}\end{equation}

Therefore all that remains in order to obtain the upper bound is to estimate the integrals in the off-diagonal term. These  are oscillatory integrals. Oscillatory integrals are integrals which involve a highly oscillatory function, that alternates between positive and negative values, so that there is a high degree of cancellation. The frequency at which the function is oscillating determines how much cancellation there is, and for high frequencies we can often use the oscillation to get a decay in the size of the integral. For the specific integral in our expression \eqref{OintE}, we address this in the following theorem.

\begin{thm}\label{OscillatoryInt}
If $a(\lambda^\alpha |x|)$ is a smooth cutoff function with compact support on the ball of radius $r=2\lambda^{-\alpha}$ centred at 0, and $\xi_j-\xi_l\neq0$ then for all $n\in\N$:
$$\left\lvert\int{a^{2}(\lambda^\alpha |x|)e^{i\lambda x\cdot(\xi_j-\xi_l)}dx}\right\rvert\leq C_n\lambda^{-2\alpha}\left(\frac{\lambda^{(-1+\alpha)}}{|\xi_j-\xi_l|}\right)^n.$$ 
\end{thm}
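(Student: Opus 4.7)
The plan is to treat this as a standard non-stationary phase estimate obtained by repeated integration by parts. First I would rescale the variable to normalise the cutoff, setting $y=\lambda^{\alpha}x$, so that $dx=\lambda^{-2\alpha}dy$ (we are in dimension two) and the integral becomes
$$\int a^{2}(\lambda^{\alpha}|x|)e^{i\lambda x\cdot(\xi_{j}-\xi_{l})}\,dx=\lambda^{-2\alpha}\int a^{2}(|y|)e^{i\lambda^{1-\alpha}y\cdot(\xi_{j}-\xi_{l})}\,dy.$$
This isolates the $\lambda^{-2\alpha}$ prefactor that matches the statement, reduces the problem to a fixed compactly supported cutoff, and makes explicit that the effective oscillation frequency is $\lambda^{1-\alpha}|\xi_{j}-\xi_{l}|$.

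Next I would define the first-order differential operator
$$L=\frac{1}{i\lambda^{1-\alpha}|\xi_{j}-\xi_{l}|^{2}}\,(\xi_{j}-\xi_{l})\cdot\nabla_{y},$$
which by direct computation satisfies $L\bigl(e^{i\lambda^{1-\alpha}y\cdot(\xi_{j}-\xi_{l})}\bigr)=e^{i\lambda^{1-\alpha}y\cdot(\xi_{j}-\xi_{l})}$. Here we crucially use that $\xi_{j}\ne\xi_{l}$ so $|\xi_{j}-\xi_{l}|\ne 0$ and $L$ is well defined. Applying $L$ inside the integrand $n$ times and integrating by parts $n$ times transfers the operator onto $a^{2}(|y|)$. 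There are no boundary contributions because $a$ is compactly supported in $|y|\le 2$. Each application produces one factor of $1/(\lambda^{1-\alpha}|\xi_{j}-\xi_{l}|)$ and replaces $a^{2}(|y|)$ by a linear combination of derivatives up to order $n$.

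After $n$ iterations the integral is bounded by
$$\lambda^{-2\alpha}\cdot\frac{1}{(\lambda^{1-\alpha}|\xi_{j}-\xi_{l}|)^{n}}\int_{|y|\le 2}\bigl|(L^{*})^{n}(a^{2})(y)\bigr|\,dy,$$
and the remaining integral is bounded by a constant $C_{n}$ depending only on $n$ and the $C^{n}$ norm of $a^{2}$ times the fixed volume of the support. Rewriting $\bigl(\lambda^{1-\alpha}|\xi_{j}-\xi_{l}|\bigr)^{-n}=\bigl(\lambda^{\alpha-1}/|\xi_{j}-\xi_{l}|\bigr)^{n}$ gives exactly the stated estimate.

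I do not expect a serious obstacle: the argument is pure non-stationary phase, with no issues from boundary terms thanks to the compact support of $a$, and no issue of vanishing gradient thanks to the hypothesis $\xi_{j}\ne\xi_{l}$. The only thing to keep track of carefully is the rescaling, so that the final powers of $\lambda$ (the $\lambda^{-2\alpha}$ prefactor and the $\lambda^{\alpha-1}$ gain per integration by parts) come out in the form stated in the theorem.
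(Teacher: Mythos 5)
Your proposal is correct and follows essentially the same route as the paper: repeated integration by parts against the non-stationary linear phase in the direction of $\xi_j-\xi_l$, with the compact support of $a$ eliminating boundary terms and each step gaining a factor $\lambda^{\alpha-1}/|\xi_j-\xi_l|$. The only cosmetic difference is that you rescale $y=\lambda^{\alpha}x$ at the outset to extract the $\lambda^{-2\alpha}$ prefactor, whereas the paper works in the original variable and obtains that factor from the volume of the support at the end.
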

\begin{proof}
In these oscillatory integrals $\phi(x)=x\cdot(\xi_j-\xi_l)$. Due the properties of the exponential function we can write:
$$\int{a^{2}(\lambda^\alpha |x|)e^{i\lambda x\cdot(\xi_j-\xi_l)}dx}=\int{\frac{1}{i\lambda\partial_v\phi(x)}a^{2}(\lambda^\alpha |x|) \partial_v\left(e^{i\lambda x\cdot(\xi_j-\xi_l)}\right)dx}$$
where $v$ is a normalised direction vector and $\partial_v$ is the directional derivative in the direction of $v$. Since $\nabla\phi(x)=\xi_j-\xi_l$ is constant and non-zero, it makes sense to pick $v=\frac{\nabla\phi(x)}{|\nabla\phi(x)|}$ since this will give the most effective upper bound as the directional derivative will take its maximum value of $|\nabla\phi(x)|=|\xi_j-\xi_l|$. Therefore we can use integration by parts in the direction of the gradient to transfer the derivative from the exponential to the function $a_\lambda$: 
\begin{equation}\label{basecase}
\int{a^{2}(\lambda^\alpha |x|)e^{i\lambda x\cdot(\xi_j-\xi_l)}dx}=\int{\frac{-1}{i\lambda|\xi_j-\xi_l|}e^{i\lambda x\cdot(\xi_j-\xi_l)}\lambda^\alpha\partial_v(a^{2}(\lambda^\alpha |x|))dx}.
\end{equation}
The boundary terms are zero due to the cutoff function $a^{2}(\lambda^\alpha x)$.
\eqref{basecase} is the base case for the inductive argument we use to show that:
\begin{equation}\label{induction}
\int{a^{2}(\lambda^\alpha |x|)e^{i\lambda x\cdot(\xi_j-\xi_l)}dx}=\int{\left(\frac{-1}{i\lambda|\xi_j-\xi_l|}\right)^ne^{i\lambda x\cdot(\xi_j-\xi_l)}\lambda^{n\alpha}\partial_v^{(n)}(a^{2}(\lambda^\alpha |x|))dx}\text{, }\forall n\in\N.
\end{equation}
To complete the inductive argument we must show that if it is true for $k$, it is true for $k+1$:
\begin{align}
\int{a^{2}(\lambda^\alpha |x|)e^{i\lambda x\cdot(\xi_j-\xi_l)}dx}&=\int{\left(\frac{-1}{i\lambda|\xi_j-\xi_l|}\right)^k\frac{1}{i\lambda|\xi_j-\xi_l|}\lambda^{k\alpha}\partial_v^{(k)}(a^{2}(\lambda^\alpha |x|))\cdot\partial_v(e^{i\lambda x\cdot(\xi_j-\xi_l)})dx}\notag\\
&=\int{-\left(\frac{-1}{i\lambda|\xi_j-\xi_l|}\right)^k\frac{1}{i\lambda|\xi_j-\xi_l|}e^{i\lambda x\cdot(\xi_j-\xi_l)}\cdot\lambda^{k\alpha}\cdot\lambda\cdot\partial_v^{(k+1)}(a^{2}(\lambda^\alpha |x|))dx}\notag\\
&=\int{\left(\frac{-1}{i\lambda|\xi_j-\xi_l|}\right)^{k+1}e^{i\lambda x\cdot(\xi_j-\xi_l)}\lambda^{(k+1)\alpha}\partial_v^{(k+1)}(a^{2}(\lambda^\alpha |x|))dx}.\notag
\end{align}
Therefore by the principle of mathematical induction \eqref{induction} is true $\forall n\in\N$. This means that 
\begin{align}
\left\lvert\int{a^{2}(\lambda^\alpha|x|)e^{i\lambda x\cdot(\xi_j-\xi_l)}dx}\right\rvert&=\left\lvert\int{\left(\frac{-1}{i\lambda|\xi_j-\xi_l|}\right)^ne^{i\lambda x\cdot(\xi_j-\xi_l)}\lambda^{n\alpha}\partial_v^{(n)}(a^{2}(\lambda^\alpha |x|))dx}\right\rvert\notag\\
&\leq\int{\left\lvert\left(\frac{-1}{i\lambda|\xi_j-\xi_l|}\right)^ne^{i\lambda x\cdot(\xi_j-\xi_l)}\lambda^{n\alpha}\partial_v^{(n)}(a^{2}(\lambda^\alpha |x|))\right\rvert dx}\notag\\
&=\int{\frac{\lambda^{n\alpha}}{\lambda^n|\xi_j-\xi_l|^n}|\partial_v^{(n)}(a^{2}(\lambda^\alpha |x|))|dx}.\notag
\end{align}
Since the cutoff function has compact support, on the ball of radius $r=2\lambda^{-\alpha}$, and since the integrand is positive we can get an upper bound by taking the region of integration to be $|x|\leq2\lambda^{-\alpha}$, and by letting $C_n$ be a positive constant that bounds the derivatives of $a^2_\lambda$;
$$\left\lvert\int{a^{2}(\lambda^\alpha|x|)e^{i\lambda x\cdot(\xi_j-\xi_l)}dx}\right\rvert\leq\frac{\lambda^{n(-1+\alpha)}}{|\xi_j-\xi_l|^n}\int_{|x|\leq2\lambda^{-\alpha}}{C_ndx}=C_n\left(\frac{\lambda^{-1+\alpha}}{|\xi_j-\xi_l|}\right)^n\int_{|x|\leq2\lambda^{-\alpha}}{1dx}.$$
The volume of this region is $4\pi\lambda^{-2\alpha}$, where the constants which are independent of $\lambda$ can be absorbed into $C_n$:
$$\left\lvert\int{a^2(\lambda^\alpha |x|)e^{i\lambda x\cdot(\xi_j-\xi_l)}dx}\right\rvert\leq C_n\lambda^{-2\alpha}\left(\frac{\lambda^{-1+\alpha}}{|\xi_j-\xi_l|}\right)^n.$$
\end{proof}
From this we can see that the absolute value of the integral decays with $\lambda^{-1+\alpha}$ and that for high frequencies, corresponding to large values of $\lambda$, this means the integral has a small value.
This upper bound is only effective if the factor that appears with each integration by parts is less than one, otherwise it increases the value each time, i.e. $\frac{\lambda^{-1+\alpha}}{|\xi_j-\xi_l|}<1$. 
If this is not the case, i.e. $\lambda^{-1+\alpha}\geq|\xi_j-\xi_l|$, then the oscillations are occurring at a low frequency, since the smallness of $|\xi_j-\xi_l|$ counteracts the rapid oscillations due to large values of $\lambda$. This means that there will not be much cancellation due to oscillations so one can obtain an effective upper bound using:
\begin{multline}\notag
\left\lvert\int{a^2(\lambda^\alpha |x|)e^{i\lambda x\cdot(\xi_j-\xi_l)}dx}\right\rvert\leq\int{\left\lvert a^2(\lambda^\alpha |x|)e^{i\lambda x\cdot(\xi_j-\xi_l)}\right\rvert dx}=\int{|a^2(\lambda^\alpha |x|)|dx}\\ 
\leq \int_{|x|\leq2\lambda^{-\alpha}}{C_ndx}\leq C_nVol(B_{2\lambda^{-\alpha}}(0))\leq C_n\lambda^{-2\alpha}
\end{multline}
where the constants which don't depend on $\lambda$ have been absorbed into $C_n$ (bounds on the derivatives of $a^2_\lambda$).\\
Both these cases can be combined and written as the following equation, which holds $\forall n\in\N$: 
\begin{equation}\label{combined}
\left\lvert\int{a^2(\lambda^\alpha |x|)e^{i\lambda x\cdot(\xi_j-\xi_l)}dx}\right\rvert\leq C_n\lambda^{-2\alpha}\left(1+\frac{|\xi_j-\xi_l|}{\lambda^{-1+\alpha}}\right)^{-n}.
\end{equation}
This works in the case where $\frac{\lambda^{-1+\alpha}}{|\xi_j-\xi_l|}\leq1$ since in this case $\frac{|\xi_j-\xi_l|}{\lambda^{-1+\alpha}}\geq1$ meaning that this term is the dominant term in the expression, \eqref{combined}, for the bound, and the 1 can be ignored, giving the same bound as before.  Similarly, if  $\frac{\lambda^{-1+\alpha}}{|\xi_j-\xi_l|}>1$, one has that $\frac{|\xi_j-\xi_l|}{\lambda^{-1+\alpha}}<1$, meaning the 1 is the dominant term in \eqref{combined}, and the other term can be ignored. This also gives the correct bound for the second case. \\
\\
For a fixed, finite, positive integer $n$, which is sufficiently large to cancel out the decay in $\lambda$, one can pick $C=\max\{C_m|m\leq n\}$ since $a^2_\lambda$ is smooth, so its derivatives are all bounded. Then
\begin{equation}\label{bothcases}
\int{a^2(\lambda^\alpha |x|)e^{i\lambda x\cdot(\xi_j-\xi_l)}dx}\leq C_n\lambda^{-2\alpha}\left(1+\frac{|\xi_j-\xi_l|}{\lambda^{-1+\alpha}}\right)^{-n}\leq C\lambda^{-2\alpha}\left(1+\frac{|\xi_j-\xi_l|}{\lambda^{-1+\alpha}}\right)^{-n}.
\end{equation}
To be able to find an upper bound for the expectation, \eqref{OintE}, we need to find an upper bound for the double sum $\sum_{\substack{j,l\\j\neq l}}\int{a^2(\lambda^\alpha |x|)e^{i\lambda x\cdot(\xi_j-\xi_l)}dx}$. By fixing a value of $j$, the bounds determined above, \eqref{bothcases} can be used to find an upper bound for the sum: $\sum_{\substack{l\\l\neq j}}\int{a(\lambda^\alpha |x|)e^{i\lambda x\cdot(\xi_j-\xi_l)}dx}$.
\begin{equation}\label{predyadic}
\sum_{\substack{l\\l\neq j}}\int{a^2(\lambda^\alpha |x|)e^{i\lambda x\cdot(\xi_j-\xi_l)}dx}\leq C\lambda^{-2\alpha}\sum_{\substack{l\\l\neq j}}\left(1+\frac{|\xi_j-\xi_l|}{\lambda^{-1+\alpha}}\right)^{-n}.
\end{equation}
\\
Since there is a main region in which the integral is large, and then a decay in its size in the surrounding regions, we use a dyadic decomposition of the unit circle to find the upper bound. This will take into account the different contributions from the integrals as $\frac{\lambda^{-1+\alpha}}{|\xi_j-\xi_l|}$ changes size.
\begin{lem}\label{dyadic}
For the set of $N=\lambda\gamma$ equally distributed $\xi_l$ on the unit circle, where $\xi_j$ is fixed, $$\sum_{\substack{l\\l\neq j}}\left(1+\frac{|\xi_j-\xi_l|}{\lambda^{-1+\alpha}}\right)^{-A}\leq \tilde C_{A}\gamma\lambda^\alpha$$ so long as $A\geq2$.
\end{lem}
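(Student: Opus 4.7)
The strategy is a standard dyadic decomposition around the fixed point $\xi_j$, exploiting the equi-spacing hypothesis to count how many $\xi_l$ lie in each annular shell.

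First I would convert the hypothesis on the equi-spacing into a lower bound on $|\xi_j-\xi_l|$. Since the $\xi_l$ are equally spaced on $\S^1$ with arc-spacing $(\lambda\gamma)^{-1}$, and the arc-length between two points on the unit circle is comparable to their chord distance so long as that distance is bounded (say by $1$), we have the two-sided comparison $|\xi_j-\xi_l|\simeq d(j,l)/(\lambda\gamma)$ for indices within the ``close'' range, where $d(j,l)$ denotes the integer distance modulo $N$. Hence the quantity $|\xi_j-\xi_l|/\lambda^{-1+\alpha}$ is comparable to $d(j,l)/(\gamma\lambda^{\alpha})$.

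Next, I would perform a dyadic decomposition of the indices $l\neq j$. Set
\[
A_k=\left\{\,l\neq j \;:\; 2^{k}\lambda^{-1+\alpha}\leq |\xi_j-\xi_l|<2^{k+1}\lambda^{-1+\alpha}\,\right\}
\quad\text{for } k=0,1,2,\dots
\]
together with the innermost piece $A_{-1}=\{\,l\neq j:|\xi_j-\xi_l|<\lambda^{-1+\alpha}\,\}$. The range of $k$ is finite because chord distances on $\S^1$ are bounded by $2$, so $k$ runs only up to $k_{\max}\simeq (1-\alpha)\log_2\lambda$. By the equi-spacing count above (an arc of length $L\lesssim 1$ contains at most $C L\lambda\gamma$ of the $\xi_l$), the cardinality satisfies $|A_k|\lesssim 2^{k+1}\lambda^{-1+\alpha}\cdot\lambda\gamma=2^{k+2}\gamma\lambda^{\alpha}$ and $|A_{-1}|\lesssim\gamma\lambda^{\alpha}$.

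On $A_k$ the integrand factor obeys $\bigl(1+|\xi_j-\xi_l|/\lambda^{-1+\alpha}\bigr)^{-A}\leq (1+2^k)^{-A}\lesssim 2^{-kA}$, and on $A_{-1}$ it is bounded by $1$. Therefore
\[
\sum_{l\neq j}\left(1+\frac{|\xi_j-\xi_l|}{\lambda^{-1+\alpha}}\right)^{-A}\;\lesssim\;\gamma\lambda^{\alpha}+\sum_{k=0}^{k_{\max}}2^{k+2}\gamma\lambda^{\alpha}\cdot 2^{-kA}\;=\;\gamma\lambda^{\alpha}\Bigl(1+4\sum_{k\geq 0}2^{k(1-A)}\Bigr).
\]
The geometric series converges precisely when $A>1$, hence for $A\geq 2$ the sum is bounded by a constant $\tilde C_A$ depending only on $A$, which gives the claimed bound $\tilde C_A\gamma\lambda^{\alpha}$.

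The only subtlety I anticipate is verifying that the chord-to-arc comparison $|\xi_j-\xi_l|\simeq d(j,l)/(\lambda\gamma)$ holds uniformly in the dyadic range being summed; for indices near the antipode of $\xi_j$ the chord distance saturates near $2$ while the arc distance is of order $1$, but this only affects the last few dyadic shells and those contribute $\lesssim \gamma\lambda$ points multiplied by a factor $\lesssim\lambda^{-(1-\alpha)A}$, which is negligible for $A\geq 2$ and $\alpha<1$, so it is safely absorbed into $\tilde C_A\gamma\lambda^\alpha$.
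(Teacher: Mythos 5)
Your argument is correct and is essentially the paper's own proof: a dyadic decomposition into shells $2^{k}\lambda^{-1+\alpha}\leq|\xi_j-\xi_l|<2^{k+1}\lambda^{-1+\alpha}$, a count of $\lesssim 2^{k}\gamma\lambda^{\alpha}$ directions per shell from the equi-spacing, and a convergent geometric series for $A\geq2$. The only cosmetic difference is that the paper bounds the angle uniformly by $\theta\leq2|\xi_j-\xi_l|$ (via the cosine rule), which handles the near-antipodal shells without the separate remark you add at the end.
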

\begin{proof}
By splitting the unit circle, in which the direction vectors are contained, into dyadic regions, the sum over the $\xi_l$ can be turned into a geometric sum. The first region is the region where $\frac{\lambda^{-1+\alpha}}{|\xi_j-\xi_l|}\geq1$ (where integration by parts does not work to give the bound as the contributions are large). This region is a sector of the unit circle (which contains all the direction vectors within this sector), which is symmetrical about the direction vector $\xi_j$. We can calculate the relationship between the angle the sector spans (in one direction from $\xi_j$); $\theta$ and the length of the line connecting $\xi_j$ and $\xi_l$; $|\xi_j-\xi_l|$, from the cosine rule ($c^2=a^2+b^2-2ab \cos C$). Since the lengths of the direction vectors, $|\xi_j|,|\xi_l|$, are 1; 
$$|\xi_j-\xi_l|^2=1+1-2\cos\theta=2-2\cos\theta=4\sin^2\left(\frac{\theta}{2}\right)$$
\begin{equation}\label{angle}
|\xi_j-\xi_l|=2\sin\left(\frac{\theta}{2}\right).
\end{equation}
The angle is important since it determines how many direction vectors are in the regions, as they are spaced evenly around the circle. Since there are $N=\gamma\lambda$ direction vectors, their angular density is $\frac{\gamma\lambda}{2\pi}$. From \eqref{angle} $\theta=2\arcsin\left(\frac{|\xi_j-\xi_l|}{2}\right)$, which for purposes of simplicity can be overestimated by $\theta\leq2|\xi_j-\xi_l|$ since $2\arcsin\left(\frac{|\xi_j-\xi_l|}{2}\right)\leq2|\xi_j-\xi_l|$. This means that in this initial region where $|\xi_j-\xi_l|\leq\lambda^{-1+\alpha}$, $\theta\leq2\lambda^{-1+\alpha}$, and consequently there are $2\lambda^{-1+\alpha}\cdot\frac{\gamma\lambda}{2\pi}=\frac{\gamma\lambda^\alpha}{\pi}$ direction vectors in the sectors on either side of $\xi_j$, meaning there are $\frac{2\gamma\lambda^\alpha}{\pi}$ direction vectors in the first region. This overestimates the number of direction vectors in this region, however since all the terms in the sum are positive this is acceptable for finding an upper bound.\\\\
The following regions are created by doubling the allowed sized of $|\xi_j-\xi_l|$, meaning the regions are characterised by the sets $X_\beta$, where each $X_\beta$ contains the $\xi_l$ which satisfy:
$$2^{\beta-1}\lambda^{-1+\alpha}\leq|\xi_j-\xi_l|< 2^\beta\lambda^{-1+\alpha}.$$
The sum can be changed to a sum involving $\beta$ as an index, but it needs a maximum value of $\beta$. We will denote this by $B$. Since $|\xi_j-\xi_l|\leq2$:
$$2\leq2^B\lambda^{-1+\alpha}$$
$$\log(\lambda^{1-\alpha})\leq(B-1)\log2$$
$$\frac{\log(\lambda^{1-\alpha})}{\log2}+1\leq B.$$
Therefore, we pick $B=\lceil{\frac{\log(\lambda^{1-\alpha})}{\log2}+1}\rceil$ since $B$ must be a natural number, and overestimating it will only include repeated terms in the sum, which is alright for an upper bound, since the terms are all positive.
The sum can now be rewritten as: 
$$\sum_{\substack{l\\l\neq j}}\left(1+\frac{|\xi_j-\xi_l|}{\lambda^{-1+\alpha}}\right)^{-A}\leq \sum_{\beta=0}^B\sum_{\substack{l\\{\xi_l\in X_\beta}}}\left(1+\frac{|\xi_j-\xi_l|}{\lambda^{-1+\alpha}}\right)^{-A}.$$
Since the $\beta=0$ term is the main term of the sum, it is helpful to separate this from the others:
$$\sum_{\substack{l\\l\neq j}}\left(1+\frac{|\xi_j-\xi_l|}{\lambda^{-1+\alpha}}\right)^{-A}\leq \sum_{\substack{l\\|\xi_j-\xi_l|\leq\lambda^{-1+\alpha}}}\left(1+\frac{|\xi_j-\xi_l|}{\lambda^{-1+\alpha}}\right)^{-A}+\sum_{\beta=1}^B\sum_{\substack{l\\\xi_l\in X_\beta}}\left(1+\frac{|\xi_j-\xi_l|}{\lambda^{-1+\alpha}}\right)^{-A}.$$
For the $\beta=0$ case the sum over $l$ is$\frac{2\gamma\lambda^\alpha}{\pi}$ and since this is the case where $\frac{|\xi_j-\xi_l|}{\lambda^{-1+\alpha}}$ is small, compared to 1, and can be ignored in the expression $\left(1+\frac{|\xi_j-\xi_l|}{\lambda^{-1+\alpha}}\right)$, the first term becomes:
$$\sum_{\substack{l\\|\xi_j-\xi_l|\leq\lambda^{-1+\alpha}}}\left(1+\frac{|\xi_j-\xi_l|}{\lambda^{-1+\alpha}}\right)^{-A}\lesssim\frac{2\gamma\lambda^\alpha}{\pi}\cdot(1)^{-A}=\hat{C_0}\gamma\lambda^{\alpha}.$$
For the following terms, we use the same way of estimating the number of direction vectors in each sector, as for the first region: $\theta\leq2|\xi_j-\xi_l|$. For the region with the outer boundary at $|\xi_j-\xi_l|=2^\beta\lambda^{-1+\alpha}$, this means that the boundary angle satisfies: $\theta\leq2^{\beta+1}\lambda^{-1+\alpha}$. As in the first region, this is overestimating the angle. The total number of direction vectors in that sector is $\frac{\lambda\gamma}{2\pi}\cdot2\cdot2^{\beta+1}\lambda^{-1+\alpha}=\frac{2^{\beta+1}\gamma\lambda^{\alpha}}{\pi}$ where the factor of two accounts for the angle going in both directions. (This counts all the direction vectors up to the boundary in each term, repeating the previous sections' ones which is not a problem for an upper bound). This evaluates the sum over $l$ for each value of $\beta$. 
Through overestimation of the $\left(1-\frac{|\xi_j-\xi_l|}{\lambda^{-1+\alpha}}\right)$ term, based on which set $X_\beta$ the $\xi_l$ is in, the second term becomes:
$$\sum_{\beta=1}^B\sum_{\substack{l\\\xi_l\in X_\beta}}\left(1+\frac{|\xi_j-\xi_l|}{\lambda^{-1+\alpha}}\right)^{-A}\leq\sum_{\beta=1}^B\sum_{l}\left(1+2^{\beta-1}\right)^{-A}.$$
Evaluating the sum over $l$ for each $\beta$ gives
$$\sum_{\beta=1}^B\sum_{\substack{l\\\xi_l\in X_\beta}}\left(1+\frac{|\xi_j-\xi_l|}{\lambda^{-1+\alpha}}\right)^{-A}\lesssim\frac{\gamma\lambda^\alpha}{\pi}\sum_{\beta=1}^B2^{\beta+1}\left(1+2^{\beta-1}\right)^{-A}.$$
Since $\beta$ is bigger than 1, the $2^{\beta-1}$ term will be dominant compared to 1 in $(1+2^{\beta-1})$:
$$ \sum_{\beta=1}^B\sum_{\substack{l\\\xi_l\in X_\beta}}\left(1+\frac{|\xi_j-\xi_l|}{\lambda^{-1+\alpha}}\right)^{-A}\lesssim\frac{\gamma\lambda^{\alpha}}{\pi}\sum_{\beta=1}^B2^{A+1}2^{\beta(1-A)}=\frac{2^{A+1}\gamma\lambda^{\alpha}}{\pi}\sum_{\beta=1}^B2^{\beta(1-A)}.$$
Since $A\geq2$ the geometric sum has a ratio ($2^{1-A}$) which is less than 1, so the series converges and the sum is bounded above. The sum starts at $\beta=1$ so the infinite sum converges to $\frac{r}{1-r}$ and $\sum_{\beta=1}^B2^{\beta(1-A)}$ is bounded above by $\frac{2^{1-A}}{1-2^{1-A}}$.
$$\sum_{\beta=1}^B\sum_{\substack{l\\\xi_l\in X_\beta}}\left(1+\frac{|\xi_j-\xi_l|}{\lambda^{-1+\alpha}}\right)^{-A}\leq\frac{2^{A+1}2^{1-A}\gamma\lambda^{\alpha}}{(1-2^{1-A})\pi}=\hat{C}\gamma\lambda^{\alpha}.$$

Therefore, adding the $\beta=0$ term and the other terms together we get:
$$\sum_{\substack{l\\l\neq j}}\left(1+\frac{|\xi_j-\xi_l|}{\lambda^{-1+\alpha}}\right)^{-A}\lesssim\hat{C_0}\gamma\lambda^\alpha+\hat{C}\gamma\lambda^\alpha=\tilde C_A\gamma\lambda^{\alpha}.$$
\end{proof}

 We can now return to the expression for the sums in the expectation, \eqref{predyadic}, and use the above result to estimate the contribution from the off-diagonal terms to the expectation value.
\\From the lemma above and \eqref{predyadic}: 
$$\sum_{\substack{l\\l\neq j}}\int{a^2(\lambda^\alpha |x|)e^{i\lambda x\cdot(\xi_j-\xi_l)}dx}\lesssim C\lambda^{-2\alpha}\cdot \tilde C_n\gamma\lambda^\alpha= K\gamma\lambda^{-\alpha}$$ (as long as the chosen fixed $n$ satisfies $n\geq2$).\\
Since this upper bound was not dependent on the $\xi_j$ that was fixed, it will hold for all $\xi_j$, and hence the sum over $j$ can be evaluated by multiplying by $N=\gamma\lambda$, giving
$$\sum_{\substack{j,l\\j\neq l}}\int{a^2(\lambda^\alpha |x|)e^{i\lambda x\cdot(\xi_j-\xi_l)}dx}\lesssim K\gamma^2\lambda^{1-\alpha}.$$
Substituting this bound for the double sum into \eqref{OintE}, the expression for the expectation gives us the final upper bound for the expectation value:
\begin{equation}\label{finalupperE}
\E(\lVert a_\lambda u\rVert^2)\leq4\pi\gamma\lambda^{1-2\alpha}+K(2p-1)^2\gamma^2\lambda^{1-\alpha}.
\end{equation}
\\
\\
\underline{Equidistribution:}
When the coefficients of the random wave are determined by a fair coin (i.e the probability of $C_j=+1$ is 0.5 and is equal to the probability of $C_j=-1$) the expectation has the same size as the volume of the region (once it has been normalised). This property of equi-distribution is interesting, and so we look for probabilities, $p$, where this property holds. Looking at \eqref{finalupperE}, this property holds if the two terms are the same size (since the first term is the expectation value for $p=0.5$). In this case $\gamma\simeq1$ so the terms are the same size when:
$$(2p-1)^2\lambda^{1-\alpha}=\mathcal{O}(\lambda^{1-2\alpha})$$
$$(2p-1)^2=\mathcal{O}(\lambda^{-\alpha})$$
$$2p-1=\mathcal{O}(\lambda^{-\frac{\alpha}{2}})$$
$$p=0.5+\mathcal{O}(\lambda^{-\frac{\alpha}{2}}).$$
This means that, up to constants, if the probability is $\lambda^{-\frac{\alpha}{2}}$ close to 0.5, the expectation will have the same size as the volume of the region. This is summarised in the following corollary.
\begin{cor}
A random wave given by \eqref{rw} where the coefficients are determined by an unfair coin ($C_j=+1$ has probability p and $C_j=-1$ has probability 1-p), and where $\gamma\simeq1$, has the property that $\E(\lVert a_\lambda u\rVert^2)\leq C\Vol(B_{\lambda^{-\alpha}}(0))$ if $$\lvert p-0.5\rvert\lesssim\lambda^{-\frac{\alpha}{2}}.$$
\end{cor}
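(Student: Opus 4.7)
The plan is to derive this corollary directly from Theorem \ref{thm:mainexpsum} by substituting the two running assumptions, $\gamma \simeq 1$ and $|p - 1/2| \lesssim \lambda^{-\alpha/2}$, and then comparing the resulting bound on the (normalised) expectation against the volume of the target ball in $\R^{2}$.

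First I would apply Theorem \ref{thm:mainexpsum} to write
$$\E[\norm{a_\lambda u}_{L^2}^2] \leq C\left(\gamma \lambda^{1-2\alpha} + (2p-1)^2 \gamma^2 \lambda^{1-\alpha}\right).$$
Setting $\gamma \simeq 1$ reduces this to $C(\lambda^{1-2\alpha} + (2p-1)^2 \lambda^{1-\alpha})$, so that the diagonal and off-diagonal terms become directly comparable as powers of $\lambda$. The only remaining task is to check that the hypothesis on $p$ forces the off-diagonal piece to be at most the size of the diagonal piece.

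Next, the hypothesis $|p-0.5| \lesssim \lambda^{-\alpha/2}$ translates into $(2p-1)^2 \lesssim \lambda^{-\alpha}$, and substituting this back gives $(2p-1)^2 \lambda^{1-\alpha} \lesssim \lambda^{1-2\alpha}$. Therefore the two terms balance and $\E[\norm{a_\lambda u}_{L^2}^2] \lesssim \lambda^{1-2\alpha}$. Applying the normalisation $u \mapsto \gamma^{-1/2}\lambda^{-1/2} u$ described in the discussion preceding the corollary (which divides the expectation by a factor of $\gamma \lambda \sim \lambda$) converts this bound to $\lesssim \lambda^{-2\alpha}$, and since $\Vol(B_{\lambda^{-\alpha}}(0)) = \pi \lambda^{-2\alpha}$ in two dimensions, we conclude $\E[\norm{a_\lambda u}_{L^2}^2] \leq C\,\Vol(B_{\lambda^{-\alpha}}(0))$ as required.

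I do not anticipate a substantive obstacle: the serious analytic work (the oscillatory integral estimate in Theorem \ref{OscillatoryInt} and the dyadic summation in Lemma \ref{dyadic}) has already been carried out inside the proof of Theorem \ref{thm:mainexpsum}. The corollary is essentially a matter of reading off the critical window on $p$ at which the off-diagonal contribution fails to outpace the diagonal one, and the only point requiring mild care is keeping track of the $\gamma\lambda$ normalisation so that the bound can genuinely be stated against $\Vol(B_{\lambda^{-\alpha}}(0))$ rather than its scaled-up version.
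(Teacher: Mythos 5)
Your proposal is correct and follows essentially the same route as the paper: the corollary is read off from the upper bound of Theorem \ref{thm:mainexpsum} (equivalently \eqref{finalupperE}) by setting $\gamma\simeq 1$, noting $(2p-1)^2\lesssim\lambda^{-\alpha}$ makes the off-diagonal term comparable to the diagonal one, and comparing with $\Vol(B_{\lambda^{-\alpha}}(0))$ under the $\gamma^{-1/2}\lambda^{-1/2}$ normalisation. Your explicit tracking of that normalisation is in fact slightly more careful than the paper's informal discussion preceding the corollary.
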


\section{Proof of Theorem \ref{thm:mainexpint}}\label{sec:mainexpint}
In this section we use a different approach to get both a lower bound and an upper bound for the expectation. Each sum we are trying to evaluate has terms which come from a single function, which means that approximating the sums by integrals becomes a likely way to get bounds on the sum. Therefore to find bounds for the expectation, we use a Darboux integral based approach where we assume that $\gamma$ (the parameter which controls the number of direction vectors) is large, possibly tending to infinity.\\
From \eqref{Expectation} the sum, in the expression for the expectation, which needs to be evaluated is 
$$S=\sum_{j,l}\int{a^{2}(\lambda^\alpha}|x|)e^{i\lambda x\cdot(\xi_j-\xi_l)dx}=\int{a^{2}(\lambda^\alpha |x|)\sum_je^{i\lambda x\cdot\xi_j}\sum_l e^{-i\lambda x\cdot\xi_l}dx}.$$
The sum over $\xi_l$ can be parametrised using $\theta_l$ as the angle between $x$ and $\xi_l$, where $x$ is fixed. $\theta_l$ is chosen so that $\theta=0$ coincides with $\xi_l=\frac{x}{|x|}$ and $\theta_l\in[-\pi,\pi)$.
Similarly, the sum over $\xi_j$ can be parametrised using $\theta_j$ as the angle between $\xi_j$ and $x$ where $x$ is fixed. $\theta_j$ is chosen so that $\theta=0$ coincides with $\xi_j=\frac{x}{|x|}$ and $\theta_j\in[-\pi,\pi)$. Since the direction vectors are equally spaced around the unit circle, the width of the interval between two consecutive $\theta_l$ or $\theta_j$ is $\frac{2\pi}{\gamma\lambda}$.
\begin{equation}\label{sum}
S=\int{a^{2}(\lambda^\alpha |x|)\sum_je^{i\lambda |x|\cos\theta_j}\sum_l e^{-i\lambda |x|\cos\theta_l}dx}.
\end{equation}
We now want to turn these sums over $j$, $l$ into integrals over $\theta$. The following lemma gives us the ability to do so.
\begin{lem}\label{sumtoint}
For sums of the form $\sum_lf(\theta_l)$, where the $\theta_l$ are evenly spaced with a spacing of $\frac{2\pi}{\gamma\lambda}$ and $\theta_l\in[-\pi,\pi)$, and where $f(\theta)$ is a continuous function which satisfies $|f'(\theta)|\lesssim\lambda^{1-\alpha}$;
$$\frac{2\pi}{\gamma\lambda}\sum_lf(\theta_l)=\int_{-\pi}^\pi f(\theta)d\theta \text{ as }\gamma\to\infty.$$
\end{lem}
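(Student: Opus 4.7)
The plan is to treat $\frac{2\pi}{\gamma\lambda}\sum_l f(\theta_l)$ as a Darboux/Riemann sum with mesh size $\Delta\theta := \frac{2\pi}{\gamma\lambda}$ and estimate the error against $\int_{-\pi}^{\pi} f(\theta)d\theta$ using the hypothesized derivative bound $|f'(\theta)|\lesssim \lambda^{1-\alpha}$. The strategy is entirely elementary: partition $[-\pi,\pi)$ into the $\gamma\lambda$ consecutive subintervals $I_l$ of length $\Delta\theta$ determined by the nodes $\theta_l$, compare the contribution of a single term of the sum to the integral over $I_l$, then sum the per-interval error and check that it vanishes as $\gamma\to\infty$.

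First I would write, for each index $l$,
\[
\Delta\theta\, f(\theta_l) - \int_{I_l} f(\theta)\,d\theta = \int_{I_l}\bigl(f(\theta_l)-f(\theta)\bigr)d\theta.
\]
By the mean value theorem (or the fundamental theorem of calculus applied to $f$), $|f(\theta_l)-f(\theta)| \leq |\theta-\theta_l|\cdot \sup|f'| \lesssim \Delta\theta \cdot \lambda^{1-\alpha}$ for $\theta\in I_l$. Hence the single-interval error is bounded by $(\Delta\theta)^2 \lambda^{1-\alpha}$, up to a fixed constant.

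Next I would sum over the $\gamma\lambda$ subintervals using the triangle inequality, obtaining
\[
\left|\Delta\theta\sum_l f(\theta_l) - \int_{-\pi}^{\pi} f(\theta)\,d\theta\right| \;\lesssim\; \gamma\lambda\cdot (\Delta\theta)^2\, \lambda^{1-\alpha} \;=\; \gamma\lambda\cdot\frac{(2\pi)^2}{(\gamma\lambda)^2}\lambda^{1-\alpha} \;\lesssim\; \frac{\lambda^{-\alpha}}{\gamma}.
\]
Since $\alpha>0$ and $\gamma\to\infty$ (and in any case $\lambda$ is fixed while taking the $\gamma$-limit), this upper bound tends to $0$, which is exactly the claimed limit.

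The main obstacle, such as it is, is bookkeeping: one must make sure the endpoint convention (the subintervals $I_l$ associated to each node $\theta_l$) covers $[-\pi,\pi)$ exactly without double-counting, and must verify that the derivative hypothesis applies uniformly on each $I_l$ so that $\sup_{I_l}|f'|$ really is $\lesssim \lambda^{1-\alpha}$. In the applications to \eqref{sum} the function $f$ is of the form $e^{\pm i\lambda|x|\cos\theta}$ (times a factor independent of $\theta$), whose derivative is $\mp i\lambda|x|\sin\theta\cdot e^{\pm i\lambda|x|\cos\theta}$ and therefore satisfies $|f'(\theta)|\leq \lambda|x|\lesssim \lambda^{1-\alpha}$ on the support of $a_\lambda$, so the hypothesis is in fact the natural one dictated by the problem.
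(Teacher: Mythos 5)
Your proposal is correct and takes essentially the same route as the paper: the paper likewise compares the sum to the integral interval by interval, using the derivative bound $|f'(\theta)|\lesssim\lambda^{1-\alpha}$ and the mesh $\frac{2\pi}{\gamma\lambda}$ to obtain a total discrepancy of order $\gamma^{-1}\lambda^{-\alpha}$ (phrased there via upper and lower Darboux sums, a squeeze argument, and an $\mathcal{O}(\gamma^{-1}\lambda^{-1})$ correction for the two endpoint subintervals you flag as bookkeeping). Your direct difference estimate is, if anything, marginally cleaner, since it applies verbatim to the complex-valued $f(\theta)=e^{\pm i\lambda|x|\cos\theta}$ through the fundamental theorem of calculus, whereas Darboux sums implicitly require splitting into real and imaginary parts.
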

\begin{proof}

To be able to turn the sum into a Darboux integral, let $P=\{-\pi,\pi\}\cup\{\text{set of }\theta_l\}$ be a partition. Due to the even spacing of the $\theta_l$, the distance between the $\theta_1$ and $-\pi$ or $\theta_N$ and $\pi$ is less than $\frac{2\pi}{\gamma\lambda}$ as otherwise there would be another $\theta_l$ in between. We use the standard notation for Darboux sums: $m_i=\inf\{f(\theta)|\theta\in[\theta_i,\theta_{i+1}]\}$, $M_i=\sup\{f(\theta)|\theta\in[\theta_i,\theta_{i+1}]\}$, $L(f)=\sum_i\Delta\theta_im_i$ and $U(f)=\sum_i\Delta\theta_iM_i$.
Since the function $f(\theta)$ is continuous, it will attain its maximum and minimum on the interval $[\theta_i,\theta_{i+1}]$. To calculate $m_i$ and $M_i$ for each interval, we use a linear Taylor approximation about $\theta_i$ on each interval, where $\hat{\theta}\in[\theta_i,\theta_{i+1}]$: 
$$f(\theta)=f(\theta_i)+f'(\hat{\theta})(\theta-\theta_i).$$
In each interval we have $\theta-\theta_i\leq\frac{2\pi}{\gamma\lambda}$. This means that on the interval $[\theta_i,\theta_{i+1}]$,  we can write: $f(\theta)=f(\theta_i)+\mathcal{O}(\gamma^{-1}\lambda^{-\alpha})$. As this is true for any $\theta$ in the interval, it will be true for the values of $\theta$ which gives the maximum and minimum values of $f$: $m_i=f(\theta_i)+\mathcal{O}(\gamma^{-1}\lambda^{-\alpha})$ and $M_i=f(\theta_i)+\mathcal{O}(\gamma^{-1}\lambda^{-\alpha})$. Therefore $M_i-m_i=\mathcal{O}(\gamma^{-1}\lambda^{-\alpha})$. This is true for all $N+1$ intervals, meaning that $$U(f)-L(f)=\sum_i (M_i-m_i)\Delta\theta_i\lesssim\sum_i\mathcal{O}(\gamma^{-1}\lambda^{-\alpha})\frac{2\pi}{\gamma\lambda}=(N+1)\mathcal{O}(\gamma^{-2}\lambda^{-1-\alpha})=\mathcal{O}(\gamma^{-1}\lambda^{-\alpha})+\mathcal{O}(\gamma^{-2}\lambda^{-1-\alpha}).$$
Since $\gamma^{-1}\lambda^{-\alpha}>\gamma^{-2}\lambda^{-1-\alpha}$, the dominant error term is $\mathcal{O}(\gamma^{-1}\lambda^{-\alpha})$:
$$0\leq U(f)-L(f)\lesssim\mathcal{O}(\gamma^{-1}\lambda^{-\alpha}).$$
When $\gamma\rightarrow\infty$ this tends to zero, and hence the upper and lower Darboux sums are equal to each other in the limit.\\
Now for any partition
$$L(f)\leq\int_{-\pi}^{\pi}f(\theta)d\theta\leq U(f)$$
so by the squeeze theorem, the upper and lower Darboux sums are equal to the Darboux integral in the limit as $\gamma$ gets large.\\
We will now see that  $L(f)\leq C_{\lambda\gamma}\sum_lf(\theta_l)\leq U(f)$ (and in fact calculate $C_{\lambda\gamma}$), so that we can apply the squeeze theorem. On the intervals of the form $[\theta_l,\theta_{l+1}]$ (of which there are $N-1$), $M_l\geq f(\theta_l)$ and $m_l\leq f(\theta_l)$ . For these intervals $\Delta\theta_l=\frac{2\pi}{\lambda\gamma}$. On the interval $[\theta_N,\pi]$ $M_N\geq f(\theta_N)$ and $m_N\leq f(\theta_N)$. For this interval the width is $\Delta\theta_N\simeq k_N\frac{2\pi}{\lambda\gamma}$. On the interval $[-\pi, \theta_1]$, using a Taylor expansion about the point $\theta_1$, $M_0\geq f(\theta_1)$, and similarly $m_0\leq f(\theta_1)$. The width of this interval is $\Delta\theta_0\simeq k_1\frac{2\pi}{\gamma\lambda}$.\\
Therefore:
$$U(f)\geq\sum_{j=1}^{N-1}f(\theta_j)\cdot\frac{2\pi}{\gamma\lambda}+f(\theta_1)k_1\frac{2\pi}{\gamma\lambda}+f(\theta_N)k_N\frac{2\pi}{\gamma\lambda}=\frac{2\pi}{\gamma\lambda}\sum_{j=1}^Nf(\theta_j)+\mathcal{O}(\gamma^{-1}\lambda^{-1})$$
and
$$L(f)\leq\sum_{j=1}^{N-1}f(\theta_j)\cdot\frac{2\pi}{\gamma\lambda}+f(\theta_1)k_1\frac{2\pi}{\gamma\lambda}+f(\theta_N)k_N\frac{2\pi}{\gamma\lambda}=\frac{2\pi}{\gamma\lambda}\sum_{j=1}^Nf(\theta_j)+\mathcal{O}(\gamma^{-1}\lambda^{-1})$$
so
$$L(f)\leq\frac{2\pi}{\gamma\lambda}\sum_{j=1}^Nf(\theta_j)+\mathcal{O}(\gamma^{-1}\lambda^{-1})\leq U(f).$$
Now, let $\epsilon>0$; then we can pick $\Gamma$ so that if $\gamma\geq\Gamma$, 
$\left\lvert\mathcal{O}(\gamma^{-1}\lambda^{-1})\right\rvert\leq\epsilon.$
This is possible as the error term is converging as $\gamma^{-1}$. From this; 
$$L(f)\leq\frac{2\pi}{\gamma\lambda}\sum_{l=1}^Nf(\theta_l)\pm\epsilon\leq U(f)$$
$$L(f)-\epsilon\leq\frac{2\pi}{\gamma\lambda}\sum_{l=1}^Nf(\theta_l)\leq U(f)+\epsilon.$$
Since this is true for all $\epsilon>0$, it follows that 
$$L(f)\leq\frac{2\pi}{\gamma\lambda}\sum_{l=1}^\infty f(\theta_l)\leq U(f).$$
Taking the limit as $\gamma\rightarrow\infty$, using the squeeze theorem; 
$$\lim_{\gamma\to\infty}\frac{2\pi}{\gamma\lambda}\sum_lf(\theta_l)=\int_{-\pi}^\pi f(\theta)d\theta.$$
\end{proof}
To use this lemma to evaluate the sums in the expression for the expectation, \eqref{sum}, we need to show the bound on the derivative of the functions hold. In this case $f(\theta)=e^{\pm i\lambda|x|\cos\theta}$. Due to the size of the region of integration $|x|\lesssim\lambda^{-\alpha}$. This means:
$$f'(\theta)=\pm i\lambda|x|\sin\theta\cdot e^{\pm i\lambda |x|\cos\theta}$$ 
$$\lvert f'(\theta)\rvert=\left\lvert \pm i\lambda|x|\sin\theta\cdot e^{\pm i\lambda |x|\cos\theta}\right\rvert\leq \lambda|x|\lesssim \lambda^{1-\alpha}.$$

Therefore we use Lemma \ref{sumtoint} on the sums in \eqref{sum} to obtain: 
\begin{equation}\label{intl}
\lim_{\gamma\to\infty}\frac{2\pi}{\gamma\lambda}\sum e^{-i\lambda|x|\cos\theta_l}=\int_{-\pi}^{\pi}{e^{-i\lambda|x|\cos\theta}d\theta}
\end{equation}
and 
\begin{equation}\label{intj}
\lim_{\gamma\to\infty}\frac{2\pi}{\gamma\lambda}\sum e^{i\lambda|x|\cos\theta_j}=\int_{-\pi}^{\pi}{e^{i\lambda|x|\cos\theta}d\theta}.
\end{equation}

We also want to obtain a rate for the convergence in $\gamma$. In particular we want to write
$$\int a^{2}(\lambda^\alpha |x|)\sum_{j,l}e^{i\lambda|x|(\cos(\theta_{j})-\cos(\theta_{l}))}dx=\frac{\gamma^{2}\lambda^{2}}{4\pi^{2}}\int a^{2}(\lambda^{\alpha}|x|)\left[\left(\int_{-\pi}^{\pi} e^{i\lambda|x|\cos\theta}d\theta\right)\left(\int_{-\pi}^{\pi}e^{-i\lambda|x|\cos\psi}d\psi\right)\right]dx+E_{\gamma}$$
and obtain bounds for $|E_{\gamma}|$. If we write
\begin{align*}
\sum_{j}e^{i\lambda|x|\cos(\theta_{j})}&=I_{1}(x)+E_{1}(x)\\
\sum_{l}e^{-i\lambda|x|\cos(\theta_{l})}&=I_{2}(x)+E_{2}(x)\end{align*}
where $I_{1}(x)$ and $I_{2}(x)$ represent the integrals and $E_{1}(x),E_{2}(x)$ the errors. 
\begin{multline}\notag
\left|\int a^{2}(\lambda^\alpha |x|)\left(\sum_{j,l}e^{i\lambda|x|(\cos(\theta_{j})-\cos(\theta_{l}))}-I_{1}(x)I_{2}(x)\right)dx\right|\\
=\left|\int a^{2}(\lambda^\alpha |x|)\left(I_{1}(x)E_{2}(x)+I_{2}(x)E_{1}(x)+E_{1}(x)E_{2}(x)\right)dx\right|\\
\leq \norm{a_{\lambda}I_{1}}_{L^{2}}\norm{a_{\lambda}E_{2}}_{L^{2}}+\norm{a_{\lambda}I_{2}}_{L^{2}}\norm{a_{\lambda}E_{1}}_{L^{2}}+\norm{a_{\lambda}E_{1}}_{L^{2}}\norm{a_{\lambda}E_{2}}_{L^{2}}
\end{multline}
where we have applied Cauchy-Schwarz to obtain the last line.

So we need to obtain control on $\norm{I_{1}}_{L^{2}}$, $\norm{I_{2}}_{L^{2}}$, $\norm{E_{1}}_{L^{2}}$ and $\norm{E_{2}}_{L^{2}}$. The control on the $L^{2}$ norms coming from $I_{1}(x)$ and $I_{2}(x)$ will follow from the stationary phase computation we use to compute the $I_{1}(x)I_{2}(x)$ term. That just leaves the error terms. We can estimate them using much the same argument as we developed in Section \ref{sec:expsum} in Theorem \ref{OscillatoryInt} and Lemma \ref{dyadic}.

\begin{lem}\label{lem:Exerror}
Suppose
\begin{equation}E_{1}(x)=\sum_{j}e^{i\lambda|x|\cos(\theta_{j})}-\frac{\gamma\lambda}{2\pi}\int_{-\pi}^{\pi}e^{i\lambda|x|\cos(\theta)}d\theta=\sum_{j}e^{i\lambda|x|\cos(\theta_{j})}-\frac{\gamma\lambda}{2\pi}\int_{\S}e^{i\lambda x\cdot \xi}d\mu(\xi)\label{E1def}\end{equation}
\begin{equation}E_{2}(x)=\sum_{l}e^{-i\lambda|x|\cos(\theta_{l})}-\frac{\gamma\lambda}{2\pi}\int_{-\pi}^{\pi}e^{-i\lambda|x|\cos(\psi)}d\psi=\sum_{l}e^{i\lambda|x|\cos(\theta_{l})}-\frac{\gamma\lambda}{2\pi}\int_{\S}e^{i\lambda x\cdot \eta}d\mu(\eta)\label{E2def}\end{equation}
then
\begin{align}
\norm{a_{\lambda}E_{1}}_{L^{2}}&\lesssim\gamma\lambda^{\frac{1}{2}-\frac{\alpha}{2}}\label{E1est}\\
\norm{a_{\lambda}E_{2}}_{L^{2}}&\lesssim\gamma\lambda^{\frac{1}{2}-\frac{\alpha}{2}}\label{E2est}.\end{align}
\end{lem}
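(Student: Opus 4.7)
The plan is to write $E_1 = S - I$ where $S(x) = \sum_j e^{i\lambda x\cdot\xi_j}$ and $I(x) = \frac{\gamma\lambda}{2\pi}\int_{\S} e^{i\lambda x\cdot\xi}d\mu(\xi)$, bound $\norm{a_\lambda S}_{L^2}$ and $\norm{a_\lambda I}_{L^2}$ separately, and conclude via the triangle inequality. The estimate for $E_2$ follows immediately from the observation that $E_2 = \overline{E_1}$, so it suffices to treat $E_1$.

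For the sum, expanding the square yields $\norm{a_\lambda S}_{L^2}^2 = \sum_{j,k} T(\xi_j - \xi_k)$, where $T(v) = \int a^2(\lambda^\alpha|x|)e^{i\lambda x\cdot v}dx$. The diagonal contributes $N\norm{a^{2}_{\lambda}}_{L^{1}} \lesssim \gamma\lambda^{1-2\alpha}$. For the off-diagonal, the argument of Section \ref{sec:expsum} applies verbatim: Theorem \ref{OscillatoryInt} controls each $|T(\xi_j - \xi_k)|$, Lemma \ref{dyadic} sums the resulting factors over $k \neq j$, and then summing over the $N = \gamma\lambda$ values of $j$ yields a bound of order $\gamma^2\lambda^{1-\alpha}$. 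Since $\gamma\lambda^\alpha \gtrsim 1$, the off-diagonal dominates and therefore $\norm{a_\lambda S}_{L^2} \lesssim \gamma\lambda^{(1-\alpha)/2}$.

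For the integral, the analogous expansion gives
\[
\norm{a_\lambda I}_{L^2}^2 = \left(\frac{\gamma\lambda}{2\pi}\right)^2 \int_{\S}\int_{\S} T(\xi - \eta)\,d\mu(\xi)d\mu(\eta).
\]
I apply Theorem \ref{OscillatoryInt} pointwise and then run the continuous analog of Lemma \ref{dyadic}: parametrizing $\eta$ by its angle to a fixed $\xi$, and decomposing $\S$ into the central region $\{|\xi - \eta| \lesssim \lambda^{-1+\alpha}\}$ (of arclength $\lesssim \lambda^{-1+\alpha}$) together with dyadic annuli, the resulting integral becomes a geometric sum, yielding $\int_{\S} (1 + |\xi - \eta|/\lambda^{-1+\alpha})^{-n}\,d\mu(\eta) \lesssim \lambda^{-1+\alpha}$ uniformly in $\xi$. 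Combining with the $\lambda^{-2\alpha}$ factor from $T$ and the $2\pi$ from integrating over $\xi$, this gives $\norm{a_\lambda I}_{L^2}^2 \lesssim \gamma^2\lambda^{1-\alpha}$, and the triangle inequality then produces \eqref{E1est} and \eqref{E2est}.

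The main subtlety is that this triangle-inequality route is intrinsically lossy: heuristically $E_1$ ought to be much smaller than either $S$ or $I$ individually, since it is precisely the Riemann-sum cancellation between the two that defines it. A sharper estimate could probably be extracted by grouping sum and integral contributions on each subinterval $[\theta_{j-1},\theta_j]$ and Taylor-expanding the integrand. Fortunately, the size $\gamma\lambda^{1/2 - \alpha/2}$ is already just large enough that the cross terms $\norm{a_{\lambda}I_{i}}_{L^{2}}\norm{a_{\lambda}E_{j}}_{L^{2}}$ produced by the preceding Cauchy--Schwarz step are absorbed into the main $\gamma^{2}\lambda^{1-\alpha}$ term of Theorem \ref{thm:mainexpint}, so chasing a better bound would not improve the final result.
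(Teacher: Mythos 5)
Your estimates for the two pieces are individually correct: $\norm{a_{\lambda}S}_{L^{2}}^{2}\lesssim \gamma\lambda^{1-2\alpha}+\gamma^{2}\lambda^{1-\alpha}$ by the diagonal/off-diagonal argument of Section \ref{sec:expsum}, and $\norm{a_{\lambda}I}_{L^{2}}^{2}\lesssim\gamma^{2}\lambda^{1-\alpha}$ by your continuous analogue of Lemma \ref{dyadic} (or directly from the Bessel asymptotics in Lemma \ref{mosp}), so the triangle inequality does reproduce the display $\norm{a_{\lambda}E_{1}}_{L^{2}}\lesssim\gamma\lambda^{\frac{1}{2}-\frac{\alpha}{2}}$ as literally stated, and $E_{2}=\overline{E_{1}}$ is fine. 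However, this route discards exactly the cancellation the lemma is meant to capture, and your closing claim---that $\gamma\lambda^{\frac{1}{2}-\frac{\alpha}{2}}$ is already small enough for the cross terms to be absorbed into the main term of Theorem \ref{thm:mainexpint}---is where the argument fails quantitatively. With your bound, $\norm{a_{\lambda}I_{i}}_{L^{2}}\norm{a_{\lambda}E_{j}}_{L^{2}}\lesssim\gamma^{2}\lambda^{1-\alpha}$, which is of the \emph{same order} as the main term $I$ itself (recall \eqref{Iest}: $\gamma^{2}\lambda^{1-\alpha}\leq I\leq 5\gamma^{2}\lambda^{1-\alpha}$), with an uncontrolled implicit constant. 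An error comparable to the main term cannot be swept into \eqref{Iest}: the lower bound $c\gamma^{2}\lambda^{1-\alpha}\leq S$, and with it the lower half of Theorem \ref{thm:mainexpint} and Corollary \ref{cor:equi}, would be lost. Absorption requires $|S-I|$ to be smaller than $\gamma^{2}\lambda^{1-\alpha}$ by a factor tending to zero, which is precisely what the extra $\gamma^{-1/2}$ in the paper's bound supplies as $\gamma\to\infty$.

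The paper obtains that gain from the per-arc cancellation you set aside as an optional refinement: writing $E_{1}$ as $\frac{\gamma\lambda}{2\pi}\sum_{j}\int_{\S_{j}}\left(e^{i\lambda x\cdot\xi_{j}}-e^{i\lambda x\cdot\xi}\right)d\mu(\xi)$ over the arcs $\S_{j}$ between consecutive directions, expanding $\int a^{2}(\lambda^{\alpha}|x|)|E_{1}(x)|^{2}dx$ into four double sums of oscillatory $x$-integrals, estimating each by the integration by parts of Theorem \ref{OscillatoryInt}, and using $|\xi-\xi_{j}|\leq 2\pi\lambda^{-1}\gamma^{-1}$ (a Taylor expansion within each arc) to extract a factor $\gamma^{-1}$ before applying the dyadic count of Lemma \ref{dyadic}. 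This yields $\int a^{2}(\lambda^{\alpha}|x|)|E_{1}(x)|^{2}dx\lesssim\gamma\lambda^{1-\alpha}$, i.e. $\norm{a_{\lambda}E_{1}}_{L^{2}}\lesssim\gamma^{\frac{1}{2}}\lambda^{\frac{1}{2}-\frac{\alpha}{2}}$, which is the bound actually quoted and used immediately after the lemma (the exponent of $\gamma$ in the lemma's display appears to be a typo for $\gamma^{\frac{1}{2}}$). With it the cross terms are $O(\gamma^{\frac{3}{2}}\lambda^{1-\alpha})$, smaller than the main term by $\gamma^{-\frac{1}{2}}$, and the absorption step is legitimate. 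So the subinterval Taylor/cancellation step you mention only in passing is not a sharpening that can be skipped; it is the essential content of the lemma, and without it the downstream lower bound does not follow.
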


\begin{proof}
We will present the proof for $E_{1}$ (the proof for $E_{2}$ is identical). For $j=1,\dots,N-1$ denote the  arc of $\S$ lying between $\xi_{j}$ and $\xi_{j+1}$ by $\S_{j}$ and  let $\S_{N}$ be the arc between $\xi_{N}$ and $\xi_{1}$. We write
$$E_{1}(x)=\frac{\gamma\lambda}{2\pi}\sum_{j}\int_{\S_{j}}e^{i\lambda x\cdot \xi_{j}}d\mu(\xi)-\sum_{j}\int_{\S_{j}}e^{i\lambda x\cdot \xi}d\mu(\xi).$$
 Note that as we saw in the proof of Lemma \ref{sumtoint} a Taylor expansion of the exponential in $\xi$ around $\xi_{j}$ would give an estimate of
$$|E_{1}(x)|\lesssim\lambda^{1-\alpha}.$$
However, by exploiting the oscillatory nature of the $x$ integrals, we are able to improve on this. Expanding $|E_{1}(x)|^{2}$ we have that
\begin{align*}
\int a^{2}(\lambda^\alpha |x|)|E_{1}(x)|^{2}dx &=\left(\frac{\gamma\lambda}{2\pi}\right)^{2}\sum_{j,l}\Bigg(\int_{\S_{j}}\int_{\S_{l}}\int a^{2}(\lambda^\alpha |x|)e^{i\lambda x\cdot (\xi_{j}-\xi_{l})}dx d\mu(\xi)d\mu(\eta) \\
&-\int_{\S_{j}}\int_{\S_{l}}\int a^{2}(\lambda^\alpha |x|)e^{i\lambda x\cdot(\xi_{j}-\eta)}dx d\mu(\xi) d\mu(\eta)\\
&-\int_{\S_{j}}\int_{\S_{l}}\int a^{2}(\lambda^\alpha |x|)e^{i\lambda x\cdot(\xi-\xi_{l})}dx d\mu(\xi)d\mu(\eta)\\
&+\int_{\S_{j}}\int_{\S_{l}}\int a^{2}(\lambda^\alpha |x|)e^{i\lambda x\cdot (\xi-\eta)}dx d\mu(\xi)d\mu(\eta)\Bigg).\end{align*}
Now we can apply the integration by parts arguments of Theorem \ref{OscillatoryInt} to each term separately. Then using a Taylor expansion and the fact that $|\xi-\xi_{j}|\leq 2\pi\lambda^{-1}\gamma^{-1}$ and $|\eta-\xi_{l}|\leq 2\pi\lambda^{-1}\gamma^{-1}$ we obtain
$$\int a^{2}(\lambda^\alpha |x|)|E_{1}(x)|^{2}\leq C_{n}\frac{\lambda^{-2\alpha}}{\gamma}\sum_{j,l}\left(1+\frac{|\xi_{j}-\xi_{l}|}{\lambda^{-1+\alpha}}\right)^{-n}.$$
Finally we use the same dyadic decomposition of Lemma \ref{dyadic} to obtain
$$\int a^{2}(\lambda^\alpha |x|)|E_{1}(x)|^{2}\leq CN\lambda^{-\alpha}=C\gamma\lambda^{1-\alpha},$$
yielding the estimate
$$\norm{a_{\lambda}E_{1}}_{L^{2}}\lesssim \gamma^{\frac{1}{2}} \lambda^{\frac{1}{2}-\frac{\alpha}{2}}.$$
\end{proof}

We now compute $I_{1}(x)$ and $I_{2}(x)$. With these in hand we can compute
$$\int a^{2}(\lambda^{\alpha}|x|)I_{1}(x)I_{2}(x)dx$$
and estimate $\norm{I_{1}}_{L^{2}}$ and $\norm{I_{2}}_{L^{2}}$.
 We will do this by applying the method of stationary phase to the angular oscillatory integrals. We first consider the case $|x|>\lambda^{-1}$ since this allows us to look only at the leading terms in the expansions.

\begin{lem}\label{mosp}
If $|x|>\lambda^{-1}$ then 
$$\int_{-\pi}^\pi e^{\pm i\lambda|x|\cos\theta}d\theta\simeq2\sqrt{2\pi}(\lambda|x|)^{-\frac{1}{2}}\cos\left(\lambda|x|-\frac{\pi}{4}\right)+\mathcal{O}\left((\lambda|x|)^{-\frac{3}{2}}\right).$$
\end{lem}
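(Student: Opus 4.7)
The plan is to apply the standard one-dimensional method of stationary phase to the integral $I_\pm(x)=\int_{-\pi}^{\pi}e^{\pm i\Lambda\cos\theta}\,d\theta$ with large parameter $\Lambda=\lambda|x|$; the hypothesis $|x|>\lambda^{-1}$ is exactly what ensures $\Lambda>1$, so an asymptotic expansion in $\Lambda^{-1/2}$ is meaningful. Setting $\phi(\theta)=\cos\theta$, the derivative $\phi'(\theta)=-\sin\theta$ vanishes at $\theta=0$ and $\theta=\pm\pi$. By $2\pi$-periodicity I may shift the domain to $[-\pi/2,3\pi/2]$, which places both stationary points $\theta_0=0$ and $\theta_1=\pi$ in the interior; both are non-degenerate with $\phi''(0)=-1$ and $\phi''(\pi)=+1$.

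Next I introduce a smooth partition of unity $1=\chi_0(\theta)+\chi_1(\theta)$ on $[-\pi/2,3\pi/2]$, with $\chi_0$ supported in a small neighbourhood of $0$ disjoint from $\pi$, and $\chi_1$ supported near $\pi$ disjoint from $0$. Each resulting piece isolates a single non-degenerate critical point, to which the standard stationary phase formula applies:
\begin{equation}\notag
\int e^{i\Lambda\phi(\theta)}\chi_j(\theta)\,d\theta=\sqrt{\frac{2\pi}{\Lambda|\phi''(\theta_j)|}}\,e^{i\Lambda\phi(\theta_j)+i\frac{\pi}{4}\mathrm{sgn}\,\phi''(\theta_j)}+\mathcal{O}\bigl(\Lambda^{-3/2}\bigr).
\end{equation}
Repeated integration by parts on the overlap between the two cut-offs shows the contribution away from the critical points is $\mathcal{O}(\Lambda^{-N})$ for any $N$, so these two stationary contributions fully determine the leading behaviour.

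Substituting $\phi(0)=1$, $\phi(\pi)=-1$, $|\phi''|=1$, $\mathrm{sgn}\,\phi''(0)=-1$, $\mathrm{sgn}\,\phi''(\pi)=+1$ into the $+$ case yields
\begin{equation}\notag
I_+(x)=\sqrt{\tfrac{2\pi}{\Lambda}}\bigl(e^{i\Lambda-i\pi/4}+e^{-i\Lambda+i\pi/4}\bigr)+\mathcal{O}\bigl(\Lambda^{-3/2}\bigr)=2\sqrt{\tfrac{2\pi}{\lambda|x|}}\cos\!\bigl(\lambda|x|-\tfrac{\pi}{4}\bigr)+\mathcal{O}\bigl((\lambda|x|)^{-3/2}\bigr),
\end{equation}
which is the claimed asymptotic. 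The $-$ sign case follows immediately: $I_-(x)=\overline{I_+(x)}$, and the right-hand side above is real, so the two expressions coincide. The only real subtlety in the argument is bookkeeping around the periodic identification of $\theta=-\pi$ with $\theta=\pi$: treating $\pi$ as an interior stationary point (rather than a boundary point, which would produce a different, weaker asymptotic) is essential, and this is precisely what the shift to $[-\pi/2,3\pi/2]$ accomplishes. Beyond that, the result is a textbook two-critical-point stationary phase calculation.
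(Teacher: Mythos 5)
Your proof is correct, and it takes a genuinely different (and arguably cleaner) route than the paper. The paper works on $[-\pi,\pi]$ with cutoffs $b_1$ near $0$ and $b_2=1-b_1$, treats $\theta=0$ as an interior non-degenerate stationary point, and treats $\theta=\pm\pi$ as \emph{boundary} stationary points, invoking separate lower-endpoint and upper-endpoint stationary-phase formulas; each endpoint contributes $\tfrac12 e^{\mp i\lambda|x|\pm i\pi/4}\sqrt{2\pi/(\lambda|x|)}$, and the two halves recombine into one full contribution, reproducing exactly your answer $2\sqrt{2\pi}(\lambda|x|)^{-1/2}\cos(\lambda|x|-\pi/4)+\mathcal{O}((\lambda|x|)^{-3/2})$. (The paper also remarks that the statement is just the classical asymptotic for $2\pi J_0(\lambda|x|)$, which you could cite as a one-line alternative.) Your shift of the domain to $[-\pi/2,3\pi/2]$ by periodicity buys you the simpler situation of two interior non-degenerate critical points with $\phi''(0)=-1$, $\phi''(\pi)=+1$, so only the standard interior formula is needed; the cost is the bookkeeping you yourself flag: on the unfolded interval a two-term partition of unity with $\chi_1$ ``supported near $\pi$'' cannot be compactly supported inside $(-\pi/2,3\pi/2)$ if $\chi_0+\chi_1=1$, since $\chi_1$ must equal $1$ at both endpoints. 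This is harmless -- either choose the cutoffs periodic and observe that the boundary terms generated by integration by parts at $-\pi/2$ and $3\pi/2$ cancel exactly by periodicity (same value of $\phi$, $\phi'$ and $\chi_1$ at the identified endpoints), or unfold the piece containing $\pi$ on $[0,2\pi]$ instead, where it is compactly supported -- but it should be said explicitly, since otherwise a naive reading produces spurious $\mathcal{O}((\lambda|x|)^{-1})$ endpoint terms that would swamp the stated error. With that sentence added, your argument is complete and matches the paper's conclusion, including the treatment of the $-$ sign by conjugation.
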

\begin{proof}
This lemma is a standard result about Bessel functions using the asymptotic form, as 
$$\int_{-\pi}^\pi e^{\pm i\lambda|x|\cos\theta}d\theta=2\pi J_0(\lambda|x|)\simeq2\sqrt{2\pi}(\lambda|x|)^{-\frac{1}{2}}\cos\left(\lambda|x|-\frac{\pi}{4}\right)+\mathcal{O}\left((\lambda|x|)^{-\frac{3}{2}}\right).$$ However, we include an alternate proof using the method of stationary phase.\\
We will use the method of stationary phase outlined in the SEGwiki \cite{SEGwebsite} to approximate the integral. To avoid having to deal with boundary terms we introduce the smooth cutoff functions $b_1(\theta)$ which satisfies $b_1(\theta)=1$ when $\theta\in[-\frac{\pi}{4},\frac{\pi}{4}]$ and has compact support on $[-\frac{\pi}{2},\frac{\pi}{2}]$, and $b_2(\theta)=1-b_1(\theta)$. These cutoff functions allow us to rewrite the integral as:
\begin{align}\notag
\int_{-\pi}^{\pi}e^{\pm i\lambda|x|\cos\theta}d\theta&=\int_{-\pi}^{\pi}b_1(\theta)e^{\pm i\lambda|x|\cos\theta}d\theta+\int_{-\pi}^{\pi}b_2(\theta)e^{\pm i\lambda|x|\cos\theta}d\theta\\\label{cutoffint}
&=\int_{-\frac{\pi}{2}}^{\frac{\pi}{2}}b_1(\theta)e^{\pm i\lambda|x|\cos\theta}d\theta+\int_{-\pi}^{-\frac{\pi}{4}}b_2(\theta)e^{\pm i\lambda|x|\cos\theta}d\theta+\int_{\frac{\pi}{4}}^{\pi}b_2(\theta)e^{\pm i\lambda|x|\cos\theta}d\theta.
\end{align}
The stationary points are the points where the phase function $\phi(\theta)=\pm\cos\theta$ satisfies $\phi'(\theta)=\mp\sin\theta=0$, which in this case are $\theta=0,\pm\pi$. This means for the first integral in \eqref{cutoffint} the stationary point is an interior stationary point, for the second interval the stationary is at the lower endpoint of the integration and for the last integral the stationary point is at the upper endpoint of integration. There are three different formulas for these three cases, see \cite{SEGwebsite}: \\
\emph{Interior stationary point:} for a stationary point at $t=c$ where $a<c<b$, $$\int_a^bf(t)e^{i\lambda\phi(t)}dt\simeq e^{i\lambda\phi(c)+isgn(\phi''(c))\frac{\pi}{4}}f(c)\sqrt{\frac{2\pi}{\lambda|\phi''(c)|}}+\mathcal{O}(\lambda^{-\frac{3}{2}}).$$ In this case $\lambda=\lambda|x|$, $\phi(x)=\pm\cos\theta$, $f(t)=b_1(\theta)$, $a=-\frac{\pi}{2}$, $b=\frac{\pi}{2}$ and $c=0$. Noting that $b_1(0)=1$, this gives:
\begin{equation}\label{interior}
\int_{-\pi}^{\pi}b_1(\theta)e^{\pm i\lambda|x|\cos\theta}d\theta\simeq e^{\pm i\lambda|x|\mp i\frac{\pi}{4}}\sqrt{\frac{2\pi}{\lambda|x|}}+\mathcal{O}((\lambda|x|)^{-\frac{3}{2}}).
\end{equation}
\emph{Lower endpoint of integration:} for a stationary point at $t=a$,
$$\int_a^b{f(t)e^{i\lambda \phi(t)}dt}\simeq \frac{1}{2}e^{i\lambda\phi(a)+isgn(\phi''(a))\pi/4}\left\{f(a)\sqrt{\frac{2\pi}{\lambda|\phi''(a)|}}+\frac{2}{\lambda|\phi''(a)|}\left[f'(a)-\frac{\phi'''(a)}{3|\phi''(a)|}\right]e^{i\lambda sgn(\phi''(a))\pi/4}\right\}+\mathcal{O}\left(\lambda^{-\frac{3}{2}}\right).$$
In this case, $\lambda=\lambda|x|$, $\phi(x)=\pm\cos\theta$, $f(t)=b_2(\theta)$, $a=-\pi$ and $b=-\frac{\pi}{4}$. Noting that $b_2(-\pi)=1$, $b_2'(-\pi)=0$ and $\phi'''(a)=\pm\sin(-\pi)=0$ this gives:
\begin{equation}\label{lower}
\int_{-\pi}^{-\frac{\pi}{4}}b_2(\theta)e^{\pm i\lambda|x|\cos\theta}d\theta\simeq\frac{1}{2}e^{\mp i\lambda|x|\pm i\pi/4}\sqrt{\frac{2\pi}{\lambda|x|}}+\mathcal{O}((\lambda|x|)^{-\frac{3}{2}}).
\end{equation}
\emph{Upper endpoint of integration:} for a stationary point at $t=b$,
$$\int_a^b{f(t)e^{i\lambda \phi(t)}dt}\simeq \frac{1}{2}e^{i\lambda\phi(b)+isgn(\phi''(b))\pi/4}\left\{f(b)\sqrt{\frac{2\pi}{\lambda|\phi''(b)|}}-\frac{2}{\lambda|\phi''(b)|}\left[f'(b)-\frac{\phi'''(b)}{3|\phi''(b)|}\right]e^{i\lambda sgn(\phi''(b))\pi/4}\right\}+\mathcal{O}\left(\lambda^{-\frac{3}{2}}\right).$$
In this case, $\lambda=\lambda|x|$, $\phi(x)=\pm\cos\theta$, $f(t)=b_2(\theta)$, $a=\frac{\pi}{4}$ and $b=\pi$. Noting that $b_2(\pi)=1$, $b_2'(\pi)=0$ and $\phi'''(b)=\pm\sin(\pi)=0$ this gives:
\begin{equation}\label{upper}
\int_{\frac{\pi}{4}}^{\pi}b_2(\theta)e^{\pm i\lambda|x|\cos\theta}d\theta\simeq \frac{1}{2}e^{\mp i\lambda|x|\pm i\pi/4}\sqrt{\frac{2\pi}{\lambda|x|}}+\mathcal{O}((\lambda|x|)^{-\frac{3}{2}}).
\end{equation}
Putting \eqref{interior},\eqref{lower} and \eqref{upper} together gives us the overall integral:
\begin{align}\notag
\int_{-\pi}^{\pi}e^{\pm i\lambda|x|\cos\theta}d\theta&\simeq e^{\pm i\lambda|x|\mp i\frac{\pi}{4}}\sqrt{\frac{2\pi}{\lambda|x|}}+\frac{1}{2}e^{\mp i\lambda|x|\pm i\pi/4}\sqrt{\frac{2\pi}{\lambda|x|}}+\frac{1}{2}e^{\mp i\lambda|x|\pm i\pi/4}\sqrt{\frac{2\pi}{\lambda|x|}}+\mathcal{O}((\lambda|x|)^{-\frac{3}{2}})\\\notag
&=e^{\pm i\lambda|x|\mp i\frac{\pi}{4}}\sqrt{\frac{2\pi}{\lambda|x|}}+e^{\mp i\lambda|x|\pm i\pi/4}\sqrt{\frac{2\pi}{\lambda|x|}}+\mathcal{O}((\lambda|x|)^{-\frac{3}{2}})\\\notag
&=\sqrt{2\pi}\left(e^{\pm i\left(\lambda|x|-\frac{\pi}{4}\right)}+e^{\mp i\left(\lambda|x|-\frac{\pi}{4}\right)}\right)(\lambda|x|)^{-\frac{1}{2}}+\mathcal{O}\left((\lambda|x|)^{-\frac{3}{2}}\right)\\
\label{cosine}
&=2\sqrt{2\pi}(\lambda|x|)^{-\frac{1}{2}}\cos\left(\lambda|x|-\frac{\pi}{4}\right)+\mathcal{O}\left((\lambda|x|)^{-\frac{3}{2}}\right).
\end{align}
\end{proof}

This method of estimating the integral only works if $\lambda|x|>1$, as otherwise the later terms in the approximation will get very big. If this is not the case, and $\lambda|x|\leq1$ the function is not oscillating a lot, so 
$$\left\lvert\int_{-\pi}^{\pi}{e^{\pm i\lambda|x|\cos\theta}d\theta}\right\rvert\leq\int_{-\pi}^\pi\lvert e^{\pm i\lambda|x|\cos\theta}\rvert d\theta\leq\int_{-\pi}^\pi1d\theta= 2\pi$$
is an appropriate bound. From this we can see that when $\lambda|x|\leq1$; 
\begin{equation}\label{constant}
\int_{-\pi}^{\pi}{e^{\pm i\lambda|x|\cos\theta}d\theta}=\mathcal{O}(1).
\end{equation}
We can now compute
$$I=\int a^{2}(\lambda^{\alpha}|x|)I_{1}(x)I_{2}(x)dx,$$
 we can replace the integrals with the approximations from Lemma \eqref{mosp} and \eqref{constant}. Since the approximations for the interior integrals depend on the size of $\lambda|x|$, it is helpful to split the integral over $x$ into two regions: $|x|\leq\lambda^{-1}$ and $|x|>\lambda^{-1}$. The term $C\lambda^{-\frac{3}{2}}|x|^{-\frac{3}{2}}$ represents the $\mathcal{O}\left((\lambda|x|)^{-\frac{3}{2}}\right)$ terms.

$$I=\frac{\gamma^2\lambda^2}{4\pi^2}\left[\int_{|x|\leq\lambda^{-1}}a^2(\lambda^\alpha |x|)\mathcal{O}(1)dx+\int_{\lambda^{-1}\leq|x|}a^2(\lambda^\alpha |x|)\left(2\sqrt{2\pi}\lambda^{-\frac{1}{2}}|x|^{-\frac{1}{2}}\cos\left(\lambda|x|-\frac{\pi}{4}\right)+C\lambda^{-\frac{3}{2}}|x|^{-\frac{3}{2}}\right)^2dx\right]$$
\begin{multline}\notag
I=\frac{\gamma^2\lambda^2}{4\pi^2}\mathcal{O}\left(\int_{|x|\leq\lambda^{-1}}a^2(\lambda^\alpha |x|)dx\right)+\frac{\gamma^2\lambda^2}{4\pi^2}\int_{\lambda^{-1}\leq|x|}a^2(\lambda^\alpha |x|)8\pi\lambda^{-1}|x|^{-1}\cos^2\left(\lambda|x|-\frac{\pi}{4}\right)dx
\\+\mathcal{O}\left(\gamma^2\int_{\lambda^{-1}\leq|x|}a^2(\lambda^\alpha |x|)|x|^{-2}\cos\left(\lambda|x|-\frac{\pi}{4}\right)dx\right).
\end{multline}
Here the two error terms $\int C^2\lambda^{-3}|x|^{-3}dx$ and $\int C\lambda^{-2}|x|^{-2}\cos(\lambda|x|-\frac{\pi}{4})dx$ have been combined, so that we are only dealing with the leading error term.\\
In the region where $|x|\leq\lambda^{-1}$ the bump function $a^2(\lambda^\alpha |x|)=1$, so
\begin{multline}\notag
I=\frac{\gamma^2\lambda^2}{4\pi^2}\mathcal{O}\left(\int_{|x|\leq\lambda^{-1}}1dx\right)+\frac{2\gamma^2\lambda}{\pi}\int_{\lambda^{-1}\leq|x|}a^2(\lambda^\alpha |x|)|x|^{-1}\cos^2\left(\lambda|x|-\frac{\pi}{4}\right)dx\\+\mathcal{O}\left(\gamma^2\int_{\lambda^{-1}\leq|x|}a^2(\lambda^\alpha |x|)|x|^{-2}\cos\left(\lambda|x|-\frac{\pi}{4}\right)dx\right).
\end{multline}
Since the cutoff function is a radial function, as it is only a function of $|x|$ and not $x$, the second and third integrals can be converted into polar coordinates. We let $|x|=r$ and note that $dx=rdrd\theta$:
\begin{align}\notag
I&=\mathcal{O}(\gamma^2)+\frac{2\gamma^2\lambda}{\pi}\int_0^{2\pi}\int_{\lambda^{-1}}a^2(\lambda^\alpha r)r^{-1}\cos^2\left(\lambda r-\frac{\pi}{4}\right)rdrd\theta+\mathcal{O}\left(\gamma^2\int_0^{2\pi}\int_{\lambda^{-1}}^{\lambda^{-\alpha}}r^{-2}\cos\left(\lambda r-\frac{\pi}{4}\right)rdrd\theta\right)\\\notag
&=\mathcal{O}(\gamma^2)+4\gamma^2\lambda\int_{\lambda^{-1}}a^2(\lambda^\alpha r)\cos^2\left(\lambda r-\frac{\pi}{4}\right)dr+\mathcal{O}\left(\gamma^2\int_{\lambda^{-1}}^{\lambda^{-\alpha}}\frac{\cos\left(\lambda r-\frac{\pi}{4}\right)}{r}dr\right)\\\notag
&=\mathcal{O}(\gamma^2)+2\gamma^2\lambda\int_{\lambda^{-1}}a^2(\lambda^\alpha r)\left(1+\sin(2\lambda r)\right)dr+\mathcal{O}\left(\gamma^2\left[\frac{Si(\lambda r)+Ci(\lambda r)}{\sqrt2}\right]_{\lambda^{-1}}^{\lambda^{-\alpha}}\right)\\\notag
&=\mathcal{O}(\gamma^2)+2\gamma^2\lambda\int_{\lambda^{-1}}a^2(\lambda^\alpha r)\left(1+\sin(2\lambda r)\right)dr+\mathcal{O}\left(\gamma^2\left[Si(\lambda^{1-\alpha})+Ci(\lambda^{1-\alpha})-Si(1)-Ci(1)\right]\right),
\end{align}
where $Si(z)$ is the sine integral defined as $Si(z)=\int_0^z\frac{\sin t}{t}dt$ and $Ci(z)$ is the cosine integral defined as $Ci(z)=-\int_{\pi}^{\infty}\frac{\cos t}{t} dt$.
Provided that $\lambda^{1-\alpha}$ is large enough, which happens when $\lambda$ is large, i.e. when $\gamma\to\infty$, the functions $Si(\lambda^{1-\alpha})\to\frac{\pi}{2}$ and $Ci(\lambda^{1-\alpha})\to0$ do not grow, but tend to constants. As a result, the two error terms can be combined to give $\mathcal{O}(\gamma^2)$:
$$I=2\gamma^2\lambda\int_{\lambda^{-1}}a^2(\lambda^\alpha r)\left(1+\sin(2\lambda r)\right)dr+\mathcal{O}(\gamma^2).$$
Due to the support and other properties of the cutoff function, and since the term being multiplied by the cutoff function was squared and is hence positive, we can form the following bounds: 
\begin{equation}\label{alpha}
2\gamma^2\lambda\int_{\lambda^{-1}}^{\lambda^{-\alpha}}\left(1+\sin(2\lambda r)\right)dr+\mathcal{O}(\gamma^2)\leq I
\end{equation}
 and
\begin{equation}\label{2alpha}
I\leq2\gamma^2\lambda\int_{\lambda^{-1}}^{2\lambda^{-\alpha}}\left(1+\sin(2\lambda r)\right)dr+\mathcal{O}(\gamma^2).
\end{equation}
Dealing with the lower bound (\eqref{alpha}) first:
$$2\gamma^2\lambda\left[r-\frac{\cos(2\lambda r)}{2\lambda}\right]_{\lambda^{-1}}^{\lambda^{-\alpha}}+\mathcal{O}(\gamma^2)\leq I$$
$$2\gamma^2\lambda\left[\lambda^{-\alpha}-\frac{\cos(2\lambda^{1-\alpha})}{2\lambda}-\lambda^{-1}+\frac{\cos(2)}{2\lambda}\right]+\mathcal{O}(\gamma^2)\leq I$$
$$2\gamma^2\lambda^{1-\alpha}+\mathcal{O}(\gamma^2)\leq I.$$
We can pick $\lambda$ to be large enough, so that $\lvert\mathcal{O}(\gamma^2)\rvert\leq\gamma^2\lambda^{1-\alpha}$. As a result:
$$\gamma^2\lambda^{1-\alpha}\leq I.$$
Dealing with the upper bound (\eqref{2alpha}):
$$I\leq2\gamma^2\lambda\left[r-\frac{\cos(2\lambda r)}{2\lambda}\right]_{\lambda^{-1}}^{2\lambda^{-\alpha}}+\mathcal{O}(\gamma^2).$$
$$I\leq2\gamma^2\lambda\left[2\lambda^{-\alpha}-\frac{\cos(4\lambda^{1-\alpha})}{2\lambda}-\lambda^{-1}+\frac{\cos(2)}{2\lambda}\right]+\mathcal{O}(\gamma^2)$$
$$I\leq4\gamma^2\lambda^{1-\alpha}+\mathcal{O}(\gamma^2).$$
We can pick $\lambda$ to be large enough, so that $\mathcal{O}(\gamma^2)\leq\gamma^2\lambda^{1-\alpha}$. As a result:
$$I\leq5\gamma^2\lambda^{1-\alpha}.$$
Therefore:
\begin{equation}\gamma^2\lambda^{1-\alpha}\leq I\leq5\gamma^2\lambda^{1-\alpha}.\label{Iest}\end{equation}

Recall that Lemma \ref{lem:Exerror} gives us that
$$\norm{a_{\lambda}E_{1}}_{L^{2}}\lesssim \gamma^{\frac{1}{2}}\lambda^{\frac{1}{2}-\frac{\alpha}{2}}\quad\text{and}\quad \norm{a_{\lambda}E_{1}}_{L^{2}}\lesssim  \gamma^{\frac{1}{2}}\lambda^{\frac{1}{2}-\frac{\alpha}{2}}. $$
Since for fixed $x$, $I_{1}(x)$ and $I_{2}(x)$ enjoy the same upper bounds the upper bound for $I$ can be used to (upper) bound $\norm{a_{\lambda}I_{1}}_{L^{2}}^{2}$ and $\norm{a_{\lambda}I_{2}}_{L^{2}}^{2}$. Therefore
\begin{align*}
|S-I|&\leq \norm{a_{\lambda}I_{1}}_{L^{2}}\norm{a_\lambda E_{2}}_{L^{2}}+\norm{a_\lambda I_{2}}_{L^{2}}\norm{a_\lambda E_{1}}_{L^{2}}+\norm{a_\lambda E_{1}}_{L^{2}}\norm{a_\lambda E_{2}}_{L^{2}}\\
&\lesssim \gamma^{\frac{3}{2}}\lambda^{1-\alpha}+\gamma\lambda^{1-\alpha}.\end{align*}
Since we are only considering the case where $\gamma$ is large we can then sweep these errors into \eqref{Iest} to obtain a $c,C$ so that
$$c\gamma^2\lambda^{1-\alpha}\leq S \leq C\gamma^2\lambda^{1-\alpha}.$$

We have now obtained both a lower bound and an upper bound for the sum $S$. Since this appears in the expression for the expectation \eqref{Expectation}, as $\E(\lVert a_\lambda u\rVert^2)=N\int{a^2(\lambda^\alpha |x|)dx}+(2p-1)^2S$, we can substitute the bounds for $S$ to obtain an upper and lower bound for the expectation, in the case where $\gamma\to\infty$. We also use the property of the cutoff function to obtain the required bounds for the first integral which comes from the diagonal terms.
\begin{equation}\label{boundsE}
\pi\gamma\lambda^{1-2\alpha}+c(2p-1)^2\gamma^2\lambda^{1-\alpha}<\E(\lVert a_\lambda u\rVert^2)<4\pi\gamma\lambda^{1-2\alpha}+C(2p-1)^2\gamma^2\lambda^{1-\alpha}.
\end{equation}\\
\underline{Equidistribution:} 
\eqref{boundsE} gives the bounds on the expectation in the case where $\gamma\to\infty$. After normalisation, the second term in them is on the scale of $\gamma\lambda^{-\alpha}$. Therefore, the weak property of equidistribution, \eqref{equiXw}, holds when
 $$(2p-1)^2\gamma\lambda^{-\alpha}=\mathcal{O}(\lambda^{-2\alpha})$$
 $$(2p-1)^2=\mathcal{O}(\lambda^{-\alpha}\gamma^{-1})$$
 $$p=0.5+\mathcal{O}(\lambda^{-\frac{\alpha}{2}}\gamma^{-\frac{1}{2}}).$$
 Therefore, up to constants, if the probability is $\lambda^{-\frac{\alpha}{2}}\gamma^{-\frac{1}{2}}$ close to 0.5, the expectation will scale with the volume of the region, and the weak equidistribution property holds. This is summarised by the following corollary.
\begin{cor}\label{cor:equi}
A random wave given by \eqref{rw} where the coefficients are determined by an unfair coin ($C_j=+1$ has probability p and $C_j=-1$ has probability 1-p), and where $\gamma\to\infty$, satisfies the condition on the expectation for the weak property of equidistribution, given by \eqref{equiXw}, if $$\lvert p-0.5\rvert\lesssim\lambda^{-\frac{\alpha}{2}}\gamma^{-\frac{1}{2}}.$$
\end{cor}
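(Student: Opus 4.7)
The plan is to read this corollary as a direct bookkeeping consequence of the two-sided bound \eqref{boundsE} (equivalently Theorem \ref{thm:mainexpint}) once the global normalisation has been applied. Recall that the normalisation convention multiplies $u$ by the prefactor $\gamma^{-1/2}\lambda^{-1/2}$, so that a fair-coin random wave satisfies $\E[\lVert a_{1}u\rVert_{L^{2}}^{2}]\approx 1$; this divides the squared $L^{2}$ norm by $\gamma\lambda$. Applying this to \eqref{boundsE} I obtain
\begin{equation*}
c\lambda^{-2\alpha}+c(2p-1)^{2}\gamma\lambda^{-\alpha}\;\leq\;\E[\lVert a_{\lambda}u\rVert_{L^{2}}^{2}]\;\leq\;C\lambda^{-2\alpha}+C(2p-1)^{2}\gamma\lambda^{-\alpha}.
\end{equation*}

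Next I compare against $\Vol(B_{\lambda^{-\alpha}}(0))\simeq\lambda^{-2\alpha}$, which is the quantity controlling \eqref{equiXw}. The diagonal term $\lambda^{-2\alpha}$ already reproduces the volume up to constants; in particular the lower inequality in \eqref{equiXw} holds automatically since $c\lambda^{-2\alpha}\leq\E[\lVert a_{\lambda}u\rVert_{L^{2}}^{2}]$. The only requirement from the upper inequality in \eqref{equiXw} is that the off-diagonal contribution does not overwhelm the diagonal one, i.e. that
\begin{equation*}
(2p-1)^{2}\gamma\lambda^{-\alpha}\;\lesssim\;\lambda^{-2\alpha}.
\end{equation*}

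Rearranging gives $(2p-1)^{2}\lesssim\lambda^{-\alpha}\gamma^{-1}$, and taking square roots yields precisely
\begin{equation*}
\lvert p-0.5\rvert\;\lesssim\;\lambda^{-\alpha/2}\gamma^{-1/2},
\end{equation*}
as required. There is no real obstacle here: the substantive work is already done in Theorem \ref{thm:mainexpint}, so the proof reduces to normalising, isolating the $(2p-1)^{2}$-dependent term, and solving the resulting algebraic inequality. The only minor subtlety worth flagging is that this argument only controls the expectation side of weak equidistribution; the variance side of \eqref{varXest} is handled separately by Theorem \ref{thm:mainvar} and the subsequent Corollary \ref{cor:var}, and is not part of the present statement.
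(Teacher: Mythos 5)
Your proposal is correct and follows essentially the same route as the paper: normalise \eqref{boundsE} by dividing by $\gamma\lambda$, observe the off-diagonal contribution is of size $(2p-1)^{2}\gamma\lambda^{-\alpha}$, and require it to be $\mathcal{O}(\lambda^{-2\alpha})$, which rearranges to $\lvert p-0.5\rvert\lesssim\lambda^{-\alpha/2}\gamma^{-1/2}$. Your additional remark that the lower bound in \eqref{equiXw} comes for free from the diagonal term, and that the variance condition is handled separately, is consistent with the paper's treatment.
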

\section{Proof of Theorem \ref{thm:mainvar}}\label{sec:mainvar}
The variance of a quantity is given by $\sigma^2(\lVert a_\lambda u\rVert^2)=\E((\lVert a_\lambda u\rVert^2-\E(\lVert a_\lambda u\rVert^2))^2)$.
We can use the independence of the coefficients, $C_j$ to obtain an expression for the variance in terms of the same off-diagonal terms involving oscillatory integrals, which were considered in the expectation. 
To simplify the expressions in the following calculations we define $I_{jl}=\int{a^2(\lambda^\alpha |x|)e^{i\lambda x\cdot(\xi_j-\xi_l)}dx}$. In the case where $j=l$ we have $I_{jl}=\int{a^2(\lambda^\alpha|x|)dx}$. It also follows that $|I_{jl}|=|I_{lj}|$.

\begin{prop}\label{varianceprop} 
The variance of $\lVert a_\lambda u\rVert^2$ for a random wave $u(x)$ given by \eqref{rw}, where the coefficients are determined by an unfair coin ($C_j=+1$ has probability p and $C_j=-1$ has probability 1-p), can be expressed as
\begin{multline}\notag
\sigma^2(\lVert a_\lambda u\rVert^2)=\left[1-(2p-1)^2\right]^2\left[\sum_{\substack{j,l\\j\neq l}}I_{jl}^2+\sum_{\substack{j,l\\j\neq l}}I_{jl}I_{lj}\right]+\left[(2p-1)^2-(2p-1)^4\right]\cdot\left[\sum_j\left(\sum_{l\neq j}I_{jl}\right)\left(\sum_{n\neq j}I_{jn}\right)\right.\\\left.+\sum_j\left(\sum_{l\neq j}I_{jl}\right)\left(\sum_{m\neq j}I_{mj}\right)+\sum_l\left(\sum_{j\neq l}I_{jl}\right)\left(\sum_{n\neq l}I_{ln}\right)+\sum_l\left(\sum_{j\neq l}I_{jl}\right)\left(\sum_{m\neq l}I_{ml}\right)\right].
\end{multline}
\end{prop}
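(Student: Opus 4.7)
The plan is to expand
\[
\|a_{\lambda}u\|_{L^{2}}^{2} = \sum_{j,l} C_{j}C_{l}\, I_{jl},
\]
square, and take expectations so that
\[
\sigma^{2}\!\bigl(\|a_{\lambda}u\|_{L^{2}}^{2}\bigr) = \sum_{j,l,m,n} \mathrm{Cov}(C_{j}C_{l},\, C_{m}C_{n})\, I_{jl}\, I_{mn}.
\]
The first move is to observe, using independence of the $C_{i}$ and $C_{i}^{2}=1$, that the covariance vanishes unless $j\neq l$, $m\neq n$, and $\{j,l\}\cap\{m,n\}\neq\emptyset$: when $j=l$ the factor $C_{j}C_{l}=1$ is constant, and when the two index sets are disjoint the two products are independent. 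This reduces the computation to a short enumeration of intersection patterns.

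Next I would tabulate the non-zero covariances. When $\{j,l\}=\{m,n\}$, i.e.\ either $(m,n)=(j,l)$ or $(m,n)=(l,j)$, the product $C_{j}C_{l}C_{m}C_{n}$ is identically $1$ while $\E[C_{j}C_{l}]\E[C_{m}C_{n}]=(2p-1)^{4}$, giving $\mathrm{Cov}=1-(2p-1)^{4}$. When $|\{j,l\}\cap\{m,n\}|=1$, the shared index contributes $C_{i}^{2}=1$ and the two remaining indices are distinct, so $\E[C_{j}C_{l}C_{m}C_{n}]=(2p-1)^{2}$ and $\mathrm{Cov}=(2p-1)^{2}-(2p-1)^{4}=(2p-1)^{2}[1-(2p-1)^{2}]$.

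The doubly-intersecting case contributes $[1-(2p-1)^{4}]\bigl[\sum_{j\neq l}I_{jl}^{2}+\sum_{j\neq l}I_{jl}I_{lj}\bigr]$ immediately. The singly-intersecting case splits into four sub-patterns ($j=m$, $j=n$, $l=m$, $l=n$), each summing $I_{jl}I_{mn}$ over triples of distinct indices. I would convert each such triple sum into one of the four ``large'' double sums in the proposition by relaxing the remaining distinctness constraint; for example
\[
\sum_{\substack{j,l,n\\\text{distinct}}} I_{jl}I_{jn} = \sum_{j}\Bigl(\sum_{l\neq j} I_{jl}\Bigr)\Bigl(\sum_{n\neq j} I_{jn}\Bigr) - \sum_{j\neq l} I_{jl}^{2},
\]
with analogous identities for the three remaining sub-patterns. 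The relaxed sums are exactly the four large sums in the statement, each appearing with coefficient $(2p-1)^{2}-(2p-1)^{4}$; the four overcount corrections together contribute $-2\sum_{j\neq l}I_{jl}^{2}-2\sum_{j\neq l}I_{jl}I_{lj}$, with $I_{jl}^{2}$ arising from the $j=m$ and $l=n$ patterns and $I_{jl}I_{lj}$ from $j=n$ and $l=m$.

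The last step is one line of algebra: combining these corrections with the doubly-intersecting contribution, the coefficient of $\sum_{j\neq l}(I_{jl}^{2}+I_{jl}I_{lj})$ collapses to
\[
[1-(2p-1)^{4}] - 2(2p-1)^{2}[1-(2p-1)^{2}] = [1-(2p-1)^{2}]^{2},
\]
exactly as claimed. The only genuine obstacle is the bookkeeping: one has to match each of the four single-intersection sub-patterns to the correct diagonal correction so that the $I_{jl}^{2}$ and $I_{jl}I_{lj}$ coefficients both reduce to the same $[1-(2p-1)^{2}]^{2}$, and to confirm that no further cases (three or four indices coinciding) contribute -- but these are already excluded by the constant/independence observation at the very start.
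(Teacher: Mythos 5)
Your proposal is correct and follows essentially the same route as the paper: a case analysis of the index-coincidence patterns of $(j,l,m,n)$, with the single-intersection triple sums completed to the four large double sums by inclusion--exclusion and the diagonal corrections recombined with the $\{j,l\}=\{m,n\}$ terms via $[1-(2p-1)^4]-2[(2p-1)^2-(2p-1)^4]=[1-(2p-1)^2]^2$. The only (cosmetic) difference is that your covariance--bilinearity framing disposes of the $j=l$, $m=n$, and disjoint cases immediately, whereas the paper cancels them by hand against the expansion of $\E(\lVert a_\lambda u\rVert^2)^2$.
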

\begin{proof}
We begin by substituting the formula for the random wave into the expression for the variance:
\begin{align}\notag
\sigma^2(\lVert a_\lambda u\rVert^2)&=\sum_kP_k\left[\iint\sum_{j,l,m,n}a^2(\lambda^\alpha |x|)a^2(\lambda^\alpha |y|)C^{(k)}_jC^{(k)}_lC^{(k)}_mC^{(k)}_ne^{i\lambda x\cdot(\xi_j-\xi_l)}e^{i\lambda y\cdot(\xi_m-\xi_n)}dxdy-\E(\lVert a_\lambda u\rVert^2)^2\right]\\
&=\sum_kP_k\left[\sum_{j,l,m,n}C^{(k)}_jC^{(k)}_lC^{(k)}_mC^{(k)}_nI_{jl}I_{mn}-\E(\lVert a_\lambda u\rVert^2)^2\right].\label{variance1}
\end{align}
From \eqref{Expectation} we have an expression for the expectation in terms of the integrals in the diagonal and off-diagonal terms: $$\E(\lVert a_\lambda u\rVert^2)=N\int{a^2(\lambda^\alpha |x|)dx}+(2p-1)^2\sum_{\substack{j,l\\j\neq l}}\int{a^2(\lambda^\alpha |x|)e^{i\lambda x\cdot(\xi_j-\xi_l)}dx}=N\int{a^2(\lambda^\alpha |x|)dx}+(2p-1)^2\sum_{\substack{j,l\\j\neq l}}I_{jl},$$ which can be substituted into the expression for the variance to achieve some cancellation. To see the cancellation we need to compute $\E^2$:
\begin{multline}\notag
\E(\lVert a_\lambda u\rVert^2)^2=N^2\iint{a^2(\lambda^\alpha |x|)a^2(\lambda^\alpha |y|)dxdy}+2N\int{a^2(\lambda^\alpha |y|)dy}\cdot(2p-1)^2\sum_{\substack{j,l\\j\neq l}}I_{jl}+(2p-1)^4\cdot\sum_{\substack{j,l\\j\neq l}}I_{jl}\cdot\sum_{\substack{m,n\\m\neq n}}I_{mn}
\end{multline}
\begin{multline}\label{E2}
\E(\lVert a_\lambda u\rVert^2)^2=N^2\iint{a^2(\lambda^\alpha |x|)a^2(\lambda^\alpha |y|)dxdy}+2N(2p-1)^2\sum_{\substack{j,l\\j\neq l}}I_{jl}\int{a^2(\lambda^\alpha |y|)dy}+(2p-1)^4\sum_{\substack{j,l,m,n\\j\neq l\\m\neq n}}I_{jl}I_{mn}.
\end{multline}
This expression can be taken outside of the sum over $k$ since it does not depend on $k$, and the resulting sum: $\sum_kP_k=1$. The first term in the expression for the variance, \eqref{variance1}, needs to be split into different combinations of $j$, $l$, $m$ and $n$ so that they can cancel with terms in the expression for $\E^2$, \eqref{E2};
$$\sum_kP_k\left(\sum_{j,l,m,n}C^{(k)}_jC^{(k)}_lC^{(k)}_mC^{(k)}_nI_{jl}I_{mn}\right).$$
Important terms are those in which there are pairs of $j$, $l$, $m$ or $n$ which are equal, since in those cases the coefficients aren't dependant on probabilities and in some cases where $j=l$ or $m=n$ the exponents simplify:\\
\emph{Case 1: $j=l$ and $m=n$:} 
$$\sum_kP_k\sum_{j,m}\iint{a^2(\lambda^\alpha |x|)a^2(\lambda^\alpha |y|)(C^{(k)}_j)^2(C^{(k)}_m)^2dxdy}=N^2\iint{a^2(\lambda^\alpha |x|)a^2(\lambda^\alpha |y|)dxdy}.$$
This cancels out with the first term in \eqref{E2}.\\
\emph{Case 2: $j=l$ but $m\neq n$:}
\begin{equation}\notag
\sum_kP_k\sum_{\substack{j,m,n\\m\neq n}}(C^{(k)}_j)^2C^{(k)}_mC^{(k)}_nI_{mn}\int{a^2(\lambda^\alpha |x|)dx}=N\sum_{\substack{m,n\\m\neq n}}\sum_kP_kC^{(k)}_mC^{(k)}_nI_{mn}\int{a^2(\lambda^\alpha |x|)dx}.
\end{equation}
From Lemma \ref{prob}, we know that $\sum_kP_kC^{(k)}_mC^{(k)}_n=(2p-1)^2$ and hence: 
\begin{equation}\notag
\sum_kP_k\sum_{\substack{j,m,n\\m\neq n}}(C^{(k)}_m)^2C^{(k)}_mC^{(k)}_nI_{mn}\int{a^2(\lambda^\alpha |x|)dx}=N(2p-1)^2\sum_{\substack{m,n\\m\neq n}}I_{mn}\int a^2(\lambda^\alpha |x|)dx.
\end{equation}
\emph{Case 3: $j\neq l$ but $m=n$:} \\
This case is similar to the $j=l$ but $m\neq n$ case, and so:
\begin{equation}\notag
\sum_kP_k\sum_{\substack{j,l,m\\j\neq l}}(C^{(k)}_j)^2C^{(k)}_jC^{(k)}_l I_{jl}\int{a^2(\lambda^\alpha |y|)dy}=N(2p-1)^2\sum_{\substack{j,l\\j\neq l}}I_{jl}\int a^2(\lambda^\alpha |y|)dy.
\end{equation}
Since the indices are arbitrary, these two cases, (Cases 2 and 3), cancel out the second term in the expression for $\E^2$, \eqref{E2}.
This leaves:
\begin{align}\notag
\sigma^2(\lVert a_\lambda u\rVert^2)&=\sum_kP_k\sum_{\substack{j,l,m,n\\j\neq l\\m\neq n}}C^{(k)}_jC^{(k)}_lC^{(k)}_mC^{(k)}_nI_{jl}I_{mn}-(2p-1)^4\sum_{\substack{j,l,m,n\\j\neq l\\m\neq n}}I_{jl}I_{mn}\\
&=\left[\sum_kP_kC^{(k)}_jC^{(k)}_lC^{(k)}_mC^{(k)}_n-(2p-1)^4\right]\sum_{\substack{j,l,m,n\\j\neq l\\m\neq n}}I_{jl}I_{mn}.\label{variance2}
\end{align}
For the terms where $j, l, n$ and $m$ are independent we need to evaluate $\sum_kP_kC^{(k)}_jC^{(k)}_lC^{(k)}_mC^{(k)}_n$ in terms of $p$. In this calculation we fix $j$, $l$, $m$ and $n$. Since the values of the entries of interest ($j$, $l$, $m$ and $n$) are independent of each other, the probabilities of the different combinations of coefficients can be calculated as follows:
$$\sum_kP_kC^{(k)}_jC^{(k)}_lC^{(k)}_mC^{(k)}_n=\E(C_j^{(k)}C_l^{(k)}C^{(k)}_mC^{(k)}_n)=\E(C_j^{(k)})\cdot\E(C_l^{(k)})\cdot\E(C_m^{(k)})\cdot\E(C_n^{(k)})=(2p-1)^4$$
by a similar argument as Lemma \ref{prob}. This is the same coefficient as that of the second term in \eqref{variance2}, so all these terms will cancel.
The remaining cases are the terms where $(j,l,m,n)$ are not all distinct but $j\neq l$ and $m\neq n$. These cases consist of the other pair terms,   ($j=m$, $l=n$) and ($j=n$, $l=m$), as well as the four cases where there is only one pair.\\
\emph{Case 4: $j=m$ and $l=n$ $(m\neq l)$:} contributes a term of the form:
\begin{equation}\notag
\left[1-(2p-1)^4\right]\sum_{\substack{j,l\\j\neq l}}I_{jl}^2.
\end{equation}
\emph{Case 5: $j=n$ and $l=m$ $(n\neq l)$:} contributes a term of the form:
\begin{equation}\notag
\left[1-(2p-1)^4\right]\sum_{\substack{j,l\\j\neq l}}I_{jl}I_{lj}.
\end{equation}
\emph{Case 6: $j=m$ and $l\neq n\neq j$:} contributes the following terms:
\begin{equation}\notag
\left[\sum_kP_kC^{(k)}_lC^{(k)}_n-(2p-1)^4\right]\sum_{\substack{j,l,n\\j\neq l, j\neq n\\l\neq n}}I_{jl}I_{jn}=\left[(2p-1)^2-(2p-1)^4\right]\left[\sum_j\left(\sum_{l\neq j}I_{jl}\right)\left(\sum_{n\neq j}I_{jn}\right)-\sum_{\substack{j,l\\j\neq l}}I_{jl}^2\right]
\end{equation}
as by Lemma \ref{prob} we know that $\sum_kP_kC^{(k)}_mC^{(k)}_n=(2p-1)^2$. The term $\sum_{\substack{j,l\\j\neq l}}I_{j,l}^2$ corresponds to subtracting the terms corresponding to $l=n$.\\
\emph{Case 7: $j=n$ and $l\neq m\neq j$, Case 8: $l=m$ and $j\neq n\neq l$} and \emph{Case 9: $l=n$ and $j\neq m\neq l$:} are similar to Case 6 and contribute the following, where we manually remove the terms where $l=m$, $j=n$ and $j=m$ for each case respectively:
\begin{multline}\notag
\left[(2p-1)^2-(2p-1)^4\right]\left[\sum_j\left(\sum_{l\neq j}I_{jl}\right)\left(\sum_{m\neq j}I_{mj}\right)-\sum_{\substack{j,l\\j\neq l}}I_{jl}I_{lj}\right]\\
+\left[(2p-1)^2-(2p-1)^4\right]\left[\sum_l\left(\sum_{j\neq l}I_{jl}\right)\left(\sum_{n\neq l}I_{ln}\right)-\sum_{\substack{j,l\\j\neq l}}I_{jl}I_{lj}\right]\\
+\left[(2p-1)^2-(2p-1)^4\right]\left[\sum_l\left(\sum_{j\neq l}I_{jl}\right)\left(\sum_{m\neq l}I_{ml}\right)-\sum_{\substack{j,l\\j\neq l}}I_{jl}^2\right].
\end{multline}
Combining all these terms gives the following expression for the variance:
\begin{multline}\notag
\sigma^2(\lVert a_\lambda u\rVert^2)=\left[1-(2p-1)^4\right]\left[\sum_{\substack{j,l\\j\neq l}}I_{jl}^2+\sum_{\substack{j,l\\j\neq l}}I_{jl}I_{lj}\right]+\left[(2p-1)^2-(2p-1)^4\right]\cdot\left[\sum_j\left(\sum_{l\neq j}I_{jl}\right)\left(\sum_{n\neq j}I_{jn}\right)\right.\\\left.+\sum_j\left(\sum_{l\neq j}I_{jl}\right)\left(\sum_{m\neq j}I_{mj}\right)+\sum_l\left(\sum_{j\neq l}I_{jl}\right)\left(\sum_{n\neq l}I_{ln}\right)+\sum_l\left(\sum_{j\neq l}I_{jl}\right)\left(\sum_{m\neq l}I_{ml}\right)-2\left(\sum_{\substack{j,l\\j\neq l}}I_{jl}^2+\sum_{\substack{j,l\\j\neq l}}I_{jl}I_{lj}\right)\right].
\end{multline}
These terms can be regrouped as follows:
\begin{multline}\notag
\sigma^2(\lVert a_\lambda u\rVert^2)=\left[1-2(2p-1)^2+(2p-1)^4\right]\left[\sum_{\substack{j,l\\j\neq l}}I_{jl}^2+\sum_{\substack{j,l\\j\neq l}}I_{jl}I_{lj}\right]+\left[(2p-1)^2-(2p-1)^4\right]\cdot\left[\sum_j\left(\sum_{l\neq j}I_{jl}\right)\left(\sum_{n\neq j}I_{jn}\right)\right.\\\left.+\sum_j\left(\sum_{l\neq j}I_{jl}\right)\left(\sum_{m\neq j}I_{mj}\right)+\sum_l\left(\sum_{j\neq l}I_{jl}\right)\left(\sum_{n\neq l}I_{ln}\right)+\sum_l\left(\sum_{j\neq l}I_{jl}\right)\left(\sum_{m\neq l}I_{ml}\right)\right].
\end{multline}
\end{proof}
From here we can continue the proof of Theorem \ref{thm:mainvar}, by computing an upper bound for the integrals in the above expression, using methods from section \ref{sec:expsum}.\\
\underline{Upper Bound:}
We can recognise the square $(1-(2p-1))^2=1-2(2p-1)^2+(2p-1)^4$ from the statement of Proposition \ref{varianceprop}, and using the triangle inequality over the different terms as well as the finite sums we get:
\begin{multline}\notag
|\sigma^2(\lVert a_\lambda u\rVert^2)|\leq\left[1-(2p-1)^2\right]^2\left[\sum_{\substack{j,l\\j\neq l}}|I_{jl}|^2+\sum_{\substack{j,l\\j\neq l}|}|I_{jl}||I_{lj}|\right]+\left[(2p-1)^2-(2p-1)^4\right]\cdot\left[\sum_j\left(\sum_{l\neq j}|I_{jl}|\right)\left(\sum_{n\neq j}|I_{jn}|\right)\right.\\\left.+\sum_j\left(\sum_{l\neq j}|I_{jl}|\right)\left(\sum_{m\neq j}|I_{mj}|\right)+\sum_l\left(\sum_{j\neq l}|I_{jl}|\right)\left(\sum_{n\neq l}|I_{ln}|\right)+\sum_l\left(\sum_{j\neq l}|I_{jl}|\right)\left(\sum_{m\neq l}|I_{ml}|\right)\right].
\end{multline}
The absolute values allow us to group the cases together, using $|I_{jl}|=|I_{lj}|$, and substituting in for $I_{jl}$ gives:
\begin{multline}\notag
\sigma^2\leq2\left[1-(2p-1)^2\right]^2\left[\sum_{\substack{j,l\\j\neq l}}\left\lvert\int{a^2(\lambda^\alpha |x|)e^{i\lambda x\cdot(\xi_j-\xi_l)}dx}\right\rvert^2\right]\\
+4\left[(2p-1)^2-(2p-1)^4\right]\cdot\left[\sum_j\left(\sum_{l\neq j}\left|\int a^2(\lambda^\alpha |x|) e^{i\lambda x\cdot(\xi_j-\xi_l)}dx\right|\right)^2\right].
\end{multline}
We now use the same upper bound,  \eqref{bothcases}, for the integrals in this expression, which was obtained as a result of Theorem \ref{OscillatoryInt} in the calculation for the expectation: 
\begin{multline}\notag
\sigma^2(\lVert a_\lambda u\rVert^2)\lesssim 2\left[1-(2p-1)^2\right]^2\left[\sum_{\substack{j,l\\j\neq l}}\left(\lambda^{-2\alpha}\left(1+\frac{|\xi_j-\xi_l|}{\lambda^{-1+\alpha}}\right)^{-n}\right)^2\right]\\
+4\left[(2p-1)^2-(2p-1)^4\right]\cdot\left[\sum_j\left(\sum_{l\neq j}\left(\lambda^{-2\alpha}\left(1+\frac{|\xi_j-\xi_l|}{\lambda^{-1+\alpha}}\right)^{-n}\right)\right)^2\right].
\end{multline}
\begin{multline}\notag
\sigma^2(\lVert a_\lambda u\rVert^2)\lesssim2\left[1-(2p-1)^2\right]^2\lambda^{-4\alpha}\sum_j\left[\sum_{\substack{l\\l\neq j}}\left(1+\frac{|\xi_j-\xi_l|}{\lambda^{-1+\alpha}}\right)^{-2n}\right]\\
+4\left[(2p-1)^2-(2p-1)^4\right]\lambda^{-4\alpha}\sum_j\left[\sum_{\substack{l\\l\neq j}}\left(1+\frac{|\xi_j-\xi_l|}{\lambda^{-1+\alpha}}\right)^{-n}\right]^2.
\end{multline}
We can use Lemma \ref{dyadic}, which evaluates the sum over $l$ using a dyadic decomposition, to rewrite this as: 
$$\sigma^2(\lVert a_\lambda u\rVert^2)\lesssim2\left[1-(2p-1)^2\right]^2\lambda^{-4\alpha}\sum_j\left[ \gamma\lambda^\alpha\right]+4\left[(2p-1)^2-(2p-1)^4\right]\lambda^{-4\alpha}\sum_j\left[ \gamma\lambda^\alpha\right]^2$$
where the implicit constant is determined by the number of integration by parts necessary to estimate the inner sum (in this case at least two iterations are necessary). Since this is independent of $j$, the sum over $j$ is evaluated by multiplying by $N=\gamma\lambda$. Simplifying this then gives the desired upper bound:
$$\sigma^2(\lVert a_\lambda u\rVert^2)\lesssim2\left[1-(2p-1)^2\right]^2\lambda^{-4\alpha}\cdot\lambda\gamma\cdot\gamma\lambda^\alpha+4\left[(2p-1)^2-(2p-1)^4\right]\lambda^{-4\alpha}\cdot\lambda\gamma\cdot\gamma^2\lambda^{2\alpha}$$
\begin{equation}\label{finalupperV}
\sigma^2(\lVert a_\lambda u\rVert^2)\lesssim\left[1-(2p-1)^2\right]^2\gamma^2\lambda^{1-3\alpha}+(2p-1)^2\left[1-(2p-1)^2\right]\gamma^3\lambda^{1-2\alpha}.
\end{equation}
\underline{Equidistribution:} 
\eqref{finalupperV} gives the upper bound on the variance in the case where $\gamma\to\infty$. After normalisation, dividing by $N^2=\gamma^2\lambda^2$, this becomes:
$$\sigma^2\lesssim\lambda^{-1-3\alpha}[1-(2p-1)^2]^{2}+\gamma\lambda^{-1-2\alpha}(2p-1)^2[1-(2p-1)^2]$$
If we assume the condition given by Corollary \ref{cor:equi}, i.e. $(2p-1)^2=\mathcal{O}(\lambda^{-\alpha}\gamma^{-1})$, the second term is of the same size as the first:
$$\sigma^2\lesssim\lambda^{-1-3\alpha}[1-(2p-1)^2]^2+\lambda^{-1-3\alpha}[1-(2p-1)^2].$$
Therefore, the requirement on the variance for equidistribution, \eqref{varXest}, holds when $\alpha<1$ and $(2p-1)^2=\mathcal{O}(\lambda^{-\alpha}\gamma^{-1})$. This is summarised by the following corollary.
\begin{cor}\label{cor:var}
A random wave given by \eqref{rw} where the coefficients are determined by an unfair coin ($C_j=+1$ has probability p and $C_j=-1$ has probability 1-p), and where $\gamma\to\infty$, satisfies the condition for equidistribution on the variance, given by \eqref{varXest}, and hence the weak equidistribution property given by \eqref{equiXw}, if $\lvert p-0.5\rvert\lesssim\lambda^{-\frac{\alpha}{2}}\gamma^{-\frac{1}{2}}$ and $\alpha<1$.
\end{cor}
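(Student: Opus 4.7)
The plan is to combine the variance bound of Theorem \ref{thm:mainvar} with the probability constraint from Corollary \ref{cor:equi}, after adjusting for the normalization convention. Since the fair-coin expectation on the unit ball is of order $\gamma\lambda$ (cf.\ Theorem \ref{thm:mainexpint} with $\alpha=0$), I would first rescale $u$ by $(\gamma\lambda)^{-1/2}$, so that $\E[\norm{a_1 u}_{L^2}^2]\approx 1$. This divides the variance bound \eqref{eqn:varest} by $\gamma^2\lambda^2$, producing
$$\sigma^2\bigl[\norm{a_\lambda u}_{L^2}^2\bigr] \lesssim \lambda^{-1-3\alpha}\bigl(1-(2p-1)^2\bigr)^2 + \gamma\lambda^{-1-2\alpha}(2p-1)^2\bigl(1-(2p-1)^2\bigr).$$

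Next I would substitute the hypothesis $|p-0.5| \lesssim \lambda^{-\alpha/2}\gamma^{-1/2}$, equivalently $(2p-1)^2 \lesssim \lambda^{-\alpha}\gamma^{-1}$. The first term on the right is already at most $\lambda^{-1-3\alpha}$, and the second becomes
$$\gamma\lambda^{-1-2\alpha} \cdot \lambda^{-\alpha}\gamma^{-1}\bigl(1-(2p-1)^2\bigr) \lesssim \lambda^{-1-3\alpha}.$$
Thus both contributions collapse to a common upper envelope of order $\lambda^{-1-3\alpha}$.

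Finally, I would compare this envelope to the target scale $\Vol(B_{\lambda^{-\alpha}}(0))^2 \approx \lambda^{-4\alpha}$ demanded by \eqref{varXest}. Writing $\lambda^{-1-3\alpha} = \lambda^{\alpha-1}\cdot\lambda^{-4\alpha}$, the required decay $\sigma^2 = o(\Vol(X)^2)$ holds precisely when $\alpha<1$. Combined with Corollary \ref{cor:equi}, which delivers the expectation condition of \eqref{equiXw} under the same probability hypothesis, this gives weak equidistribution. I do not anticipate a genuine obstacle here: the heavy lifting has already been done in Theorem \ref{thm:mainvar} and Corollary \ref{cor:equi}. The only care required is bookkeeping of the normalization factors---in particular, verifying that the extra $\gamma$ in the second term of the variance bound is exactly absorbed by the $\gamma^{-1}$ supplied by the probability hypothesis, so that the two terms really do merge into a single $\lambda^{-1-3\alpha}$ envelope rather than one dominating the other.
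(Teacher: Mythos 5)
Your proposal is correct and follows essentially the same route as the paper: normalise by dividing the variance bound of Theorem \ref{thm:mainvar} by $N^{2}=\gamma^{2}\lambda^{2}$, insert $(2p-1)^{2}\lesssim\lambda^{-\alpha}\gamma^{-1}$ so that both terms are $\lesssim\lambda^{-1-3\alpha}$, observe that $\lambda^{-1-3\alpha}=\lambda^{\alpha-1}\cdot\lambda^{-4\alpha}=o\!\left(\Vol(B_{\lambda^{-\alpha}}(0))^{2}\right)$ precisely when $\alpha<1$, and combine with Corollary \ref{cor:equi} for the expectation condition. Your only addition is spelling out the final comparison to $\lambda^{-4\alpha}$, which the paper leaves largely implicit.
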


\bibliographystyle{plain}
\bibliography{references}

\end{document}